\newtheorem{lemma}{Lemma}
\newtheorem{theorem}[lemma]{Theorem}
\newtheorem{defn}[lemma]{Definition}
\newtheorem{corollary}[lemma]{Corollary}
\newtheorem{prop}[lemma]{Proposition}
\newtheorem{remark}[lemma]{Remark}
\newtheorem{question}{Question}
\newtheorem{example}{Example}
{\medskip}
\renewenvironment{proof}[1][\proofname]{%
      \par\pushQED{\qed}\normalfont%
      \topsep6\p@\@plus6\p@\relax    
      \trivlist\item[\hskip\labelsep\bfseries#1\@addpunct{.}]%
      \ignorespaces 
}{%
      \popQED\endtrivlist\@endpefalse 
}
\date{}
\title{Right-angled links in thickened surfaces}
\author{Rose Kaplan-Kelly}
\begin{document}
\maketitle

\begin{abstract}
    Traditionally, alternating links
    are studied with alternating diagrams on $S^2$ in $S^3$. In this paper, we consider links which are alternating on higher genus surfaces $S_g$ in $S_g \times I$. We define what it means for such a link to be right-angled generalized completely realizable (RGCR) and show that this property is equivalent to the link having two totally geodesic checkerboard surfaces, equivalent to each checkerboard surface consisting of one type of polygon, and equivalent to a set of restrictions on the link's alternating projection diagram. We then use these diagram restrictions to classify RGCR links according to the polygons in their checkerboard surfaces, provide a bound on the number of RGCR links for a given surface of genus $g$, and find an RGCR knot. Along the way, we answer a question posed by Champanerkar, Kofman, and Purcell about links with alternating projections on the torus.
\end{abstract}

\section{Introduction}

We traditionally consider a link, $L$, with its projection diagram on $S^2$ and its complement in $S^3$. We say that such a link, $L$, is \textit{hyperbolic} if its complement admits a complete hyperbolic structure. An overarching question about hyperbolic links, and in knot theory more generally, is how one can gather information about the geometric properties of a link complement from the link's diagrams. Alternating links have proven especially useful for this question because of the amount of information we can gain about an alternating link complement in $S^3$ from its alternating diagram. Menasco showed that all prime nonsplittable alternating links are hyperbolic except for $(2, q)-$ torus links \cite{MenascoClassic}. Howie \cite{HowieAlt} and Greene \cite{GreeneAlt} independently showed that alternating links can be characterized by their checkerboard surfaces. Using the checkerboard surfaces of the link's alternating diagram, Menasco described the polyhedral decomposition of an alternating link complement \cite{Menasco}. It follows that we would like to consider broader classes of links which share these helpful properties and see which techniques for studying alternating link complements in $S^3$ can be generalized.

 In this paper, we will consider links with alternating projections on higher genus surfaces. These links have been studied by several authors. Adams considered toroidally alternating links, a generalization of alternating links to projections on a Heegaard torus \cite{AdamsAlone}. Hayashi \cite{Hayashi} and Ozawa \cite{Ozawa} studied links with projections on higher genus surfaces. 
 Adams, Calderon, and Mayer considered links in thickened surfaces constructed from $k$-uniform tilings of $\mathbb{H}^2, \mathbb{S}^2$ and $\mathbb{E}^2$ \cite{ACM}. Adams, Albors-Riera, Haddock, Li, Nishida, Reinoso, and Wang showed that links satisfying a generalized alternating and prime condition in thickened surfaces are hyperbolic \cite{Adams.etal}. Champanerkar, Kofman, and Purcell considered semi-regular links, alternating links in the thickened torus which correspond to convex regular Euclidean tilings. Howie introduced weakly generalized alternating links and showed that they are nontrivial, nonsplit, and prime \cite{HowieThesis}. Howie and Purcell introduced broader generalizations of alternating links in compact $3$-manifolds. They found an angled chunk decomposition for these generalized link complements and proved a more general result providing conditions for their complements to be hyperbolic \cite{HowiePurcell}. 
 
 Throughout this paper we will assume that our projection surface, $F$, is a connected, closed, orientable, surface with non-positive Euler characteristic. We will assume our links to be in the thickened surface $F \times I$ for $I=[-1,1]$ with generalized projection diagram $\pi(L)$ on $F \times \{0\}$ and specify the genus of the surface as needed. We will use the following definitions adapted from \cite{HowiePurcell} for this setting. 

\begin{defn}
\normalfont
    The generalized diagram $\pi(L)$ of a link $L$ on surface $F$ with genus $g \geq 1$ is \textit{weakly prime} if for any disk $D$ in $F$ with $\partial D$ that intersects the link diagram transversely exactly twice, $\pi(L) \cap D$ is an embedded arc.
\end{defn}

\begin{defn}
\normalfont The diagram $\pi(L)$ is \textit{cellular} if all complementary regions, called faces, on $F$ are disks. 
\end{defn}

\begin{defn}
\normalfont A generalized diagram is \textit{alternating} if the boundary of each region of $F \backslash \pi(L)$ admits an orientation such that crossings alternate between under and over as described below.
\end{defn}

Orient each region of $F \backslash \pi(L)$ such that the induced orientation on the boundary runs from under-crossings to over-crossings. Next, color the faces white or shaded depending on whether their orientation is clockwise or counter-clockwise. The result of this coloring will be that faces which share an edge have opposite orientations, and therefore are different colors, while faces that are diagonal from each other with respect to a crossing have the same color. This coloring is called a \textit{checkerboard coloring} of the generalized link diagram. A link with a generalized diagram on $F$ that is weakly prime, cellular, and alternating is also checkerboard colorable.

\begin{defn}
\normalfont Given a generalized diagram $\pi(L)$ with a checkerboard coloring, we can form two \textit{checkerboard surfaces}. Insert a twisted band at each crossing of $(F \times I) \backslash L$ to connect all shaded faces. The result is a surface composed of shaded faces with boundary the link. Perform the same procedure for the white faces. The result is two surfaces that intersect in arcs in $(F \times I) \backslash L$ at each crossing. These \textit{crossing arcs} run between the under-strand and over-strand of $L$.
\end{defn}

\begin{defn}
\normalfont A generalized diagram $\pi(L)$ is \textit{reduced alternating} if it is alternating, weakly prime, and every component of $L$ projects to at least one crossing of $\pi(L)$. 
\end{defn}

Note that if our link diagram is alternating, weakly prime, and cellular then it is reduced alternating as well but reduced alternating diagrams need not be cellular.

Howie and Purcell call links that are reduced alternating and checkerboard colorable \textit{weakly generalized alternating}, with the additional criteria that the links satisfy a so-called representativity condition \cite{HowiePurcell}. This condition will always hold for our setting in thickened surfaces because $F$ has no compression disks in $F \times I$. 

The main results of this paper are the following:

\begin{theorem}
\label{TFAE1}
Let $L$ be a cellular weakly generalized alternating link on $F$ in $F \times I$ with reduced alternating diagram $\pi(L)$. Then the following are equivalent:
\begin{enumerate}
    \item The complete hyperbolic structure on $(F \times I) \backslash L$ with totally geodesic boundary is the result of gluing two right-angled generalized polyhedra built from the checkerboard surfaces of $\pi(L)$ by rotation on their faces.
    \item $L$ has two totally geodesic checkerboard surfaces.
    \item Each checkerboard surface of $\pi(L)$ contains polygons of precisely one number of sides.
    \item At most two types of polygons occur in $\pi(L)$. Either $\pi(L)$ consists entirely of one type of regular polygon or both types, one with $n$ sides and the other with $m$ sides, are regular, and about each vertex the polygons are arranged in an $[n, m, n, m]$ pattern. 
\end{enumerate}
\end{theorem}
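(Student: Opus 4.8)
The plan is to establish the cycle of implications $(1)\Rightarrow(2)\Rightarrow(3)\Rightarrow(4)\Rightarrow(1)$, working throughout with the decomposition of $(F\times I)\setminus L$ obtained by cutting it open along its two checkerboard surfaces. Following Menasco \cite{Menasco} and its extension to thickened surfaces by Howie and Purcell \cite{HowiePurcell}, this cut yields two generalized polyhedra (``chunks'') $P_+$ and $P_-$, each combinatorially a copy of the diagram $\pi(L)$: the white and shaded faces are the two colour classes of faces of $\pi(L)$, the edges are the crossing arcs, and the remaining faces come from $F\times\{\pm 1\}$. One recovers $(F\times I)\setminus L$ by regluing $P_+$ to $P_-$ along the white and shaded faces, each white face being reglued to its mate by a rotation through one edge in one direction and each shaded face by a rotation through one edge in the other. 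I would record this decomposition, the structure of its edge classes, and the local picture at a crossing arc (four faces, in cyclic order white--shaded--white--shaded, with dihedral angles summing to $2\pi$) as a preliminary step, since every later step reasons about these dihedral angles.

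For $(1)\Rightarrow(2)$: if the complete structure is realized by gluing right-angled $P_\pm$ along these face rotations, then at every crossing arc all four dihedral angles are $\pi/2$; hence a white sheet continues across the adjacent shaded sheet without bending ($\tfrac\pi2+\tfrac\pi2=\pi$), and likewise for the shaded sheets, so each face is a totally geodesic polygon that is unbent across every one of its edges, and the union of the white faces and the union of the shaded faces — the two checkerboard surfaces — are totally geodesic.

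For $(2)\Rightarrow(3)$: if both checkerboard surfaces are totally geodesic then, since $F\times\{\pm1\}$ is totally geodesic by hypothesis, the decomposition along them is a geometric one, so the $P_\pm$ are realized by hyperbolic generalized polyhedra with isometric regluing maps. Because the regluing of a white face is a rotation cycling its edges by one, and this rotation is now an isometry of a convex hyperbolic polygon, that polygon must be regular and the two faces it identifies must have the same side number; propagating this through the face- and edge-classes of the decomposition gives all white faces a common side number $n$ (all regular) and all shaded faces a common side number $m$, which is (3) — and already most of (4). \emph{I expect this to be the main obstacle}: turning the soft hypothesis ``totally geodesic'' into rigid combinatorial uniformity means ruling out geometric decompositions whose regluing maps happen to be isometries of non-congruent polygons, and this is exactly where the one-edge rotation structure and a Menasco-type rigidity argument have to be used essentially.

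For $(3)\Rightarrow(4)$ and $(4)\Rightarrow(1)$: since $\pi(L)$ is a $4$-valent checkerboard-coloured graph, the colours alternate around each vertex, so ``all white faces are $n$-gons and all shaded faces are $m$-gons'' forces, by local-to-global propagation on the simply connected universal cover of $F$, that $\pi(L)$ carries the quasiregular $[n,m,n,m]$ (or $[n,n,n,n]$) incidence pattern; it remains to install the geometry. Given $n$ and $m$ I would build the two right-angled generalized polyhedra from regular hyperbolic $n$- and $m$-gon faces — invoking the existence of right-angled hyperbolic polyhedra of the required combinatorial type (via Andreev's or Rivin's realization theorems), which is where the genus and the cellular, weakly prime hypotheses enter — check that the one-edge rotations are then isometries, that the dihedral angles around each interior edge class sum to $2\pi$ while the faces from $F\times\{\pm1\}$ assemble to totally geodesic boundary, and conclude that the glued-up object carries a complete hyperbolic structure of the form in (1). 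By Mostow rigidity and the known hyperbolicity of cellular weakly generalized alternating links this is \emph{the} complete structure, giving (1), and the identical construction applied directly to the hypothesis of (4) gives $(4)\Rightarrow(1)$. The genuine but routine work here is the angle-sum bookkeeping for the edge classes and the verification that the needed right-angled model polyhedra exist on a genus-$g$ surface.
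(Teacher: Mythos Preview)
Your cycle $(1)\Rightarrow(2)\Rightarrow(3)\Rightarrow(4)\Rightarrow(1)$ matches the paper, and your $(1)\Rightarrow(2)$ is the same dihedral-angle argument the paper gives. The real divergence, and a genuine gap, is in $(2)\Rightarrow(3)$.

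You argue that the one-edge rotation gluing $T_i\to T_i'$ is an isometry, hence each face is regular, and then that ``propagating through the face- and edge-classes'' forces all white faces to share a side number. Two things are missing. First, the gluing is an isometry from a face of $P_+$ to the \emph{corresponding} face of $P_-$; to conclude that this is a self-rotation of a single polygon you need $P_+$ and $P_-$ to be isometric, and nothing you have said establishes that. The paper proves this separately (its Lemma~\ref{CRandIsoLemma}) by invoking Schlenker's rigidity theorem for hyperideal polyhedra, which is not a step one can skip. Second, even granting regularity of every face, your propagation does not work: an edge class in this decomposition consists of four crossing arcs, and identifying edges tells you nothing about the \emph{number of sides} of the adjacent ideal polygons---two regular ideal polygons with different side numbers can perfectly well share an edge. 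The paper handles this with a separate argument (its Proposition~\ref{Gan_DiagsEqu}): lift the two diagonal white faces at a crossing to $\mathbb{H}^3$, normalize their shared edge to $(0,\infty)$, use the parallelogram structure of the cusp cross-section (coming from the two totally geodesic surfaces) to show the lifts are symmetric under $z\mapsto -z$, and then apply Aitchison--Reeves's cross-ratio characterization of regular ideal $n$-gons to conclude $n=m$. This is the ``Menasco-type rigidity'' you allude to, but it is a specific computation, not a connectivity argument.

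Your $(3)\Rightarrow(4)$ also has a hole: statement~(3) says nothing about regularity, so you must produce a diagram with \emph{regular} faces from the combinatorial $[n,m,n,m]$ pattern alone. The paper does this by citing Datta--Gupta (for $\mathbb{H}^2$) and Datta--Maity (for $\mathbb{E}^2$), which guarantee that any tiling with that vertex type is combinatorially equivalent to a semi-regular one. For $(4)\Rightarrow(1)$ your plan to invoke Andreev or Rivin is problematic because those theorems concern polyhedra in $\mathbb{H}^3$, not the generalized polyhedra here, which have an ultra-ideal vertex and a totally geodesic truncation face of genus $g$; the paper instead uses the explicit symmetric bipyramid decomposition of Adams--Calderon--Mayer, computes the wedge angles directly, and checks that slicing the bipyramids along the checkerboard surfaces and regluing yields dihedral angle $(\pi-\alpha_n)/2+(\pi-\alpha_m)/2=\pi/2$ at every edge. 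That computation replaces the existence theorem you would need.
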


We say that a link is \textit{right-angled generalized completely realizable (RGCR)} if it satisfies the equivalent conditions of Theorem \ref{TFAE1}. 

Gan showed that there are precisely three links in $S^3$ with complements that admit this type of right-angled structure \cite{Gan} and Champanerkar, Kofman, and Purcell showed that there are only two convex regular Euclidean tilings that correspond to links in the thickened torus with this structure \cite{CKPBiperiodic}. In this paper we show:

\begin{theorem}
    Given a projection surface $F$ of genus $g$, there are finitely many tilings of $\mathbb{H}^2$ and $\mathbb{E}^2$ which correspond to RGCR links, $L$, with $\pi(L)$ on $F$. Moreover, for each $F$ of genus $g > 1$, the number of RGCR links is bounded above by $\displaystyle{\bigg(\frac{310g^2}{9}-\frac{101g}{3}+4\bigg)[(84g-83)!]}$.
    \end{theorem}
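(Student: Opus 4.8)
The plan is to pass from each RGCR link to the combinatorial tiling of $F$ carried by its diagram, via Theorem~\ref{TFAE1}, and then to bound separately (i) the number of such tilings that fit on a fixed genus-$g$ surface and (ii) the number of links each tiling yields. By condition~(4) of Theorem~\ref{TFAE1}, the diagram $\pi(L)$ of an RGCR link cellulates $F$ by one type of regular polygon with vertex configuration $[n,4]$, or by two types in the pattern $[n,m,n,m]$; equivalently $\pi(L)$ is a quotient of a regular tiling $\{n,4\}$ or of a quasiregular tiling, in either $\mathbb{E}^2$ or $\mathbb{H}^2$. Moreover, once the faces are checkerboard colored the alternating condition determines every crossing up to one global mirror swap, so $L$ is recovered, up to a bounded amount of crossing data, from the underlying $4$-valent combinatorial map of $\pi(L)$ on $F$. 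It therefore suffices to count, up to homeomorphism of $F$, the $4$-valent cellulations of $F$ of these combinatorial types.

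First I would run the Euler characteristic bookkeeping. Writing $V$ for the number of crossings, a $4$-valent cellular diagram has $E=2V$ edges and $P=V+2-2g$ faces, since $\chi(F)=V-E+P=2-2g$. Counting the edge– and vertex–face incidences of the polygons then forces
\[
V=\frac{n(2g-2)}{n-4}\qquad\text{or}\qquad V=\frac{nm(2g-2)}{nm-2n-2m}
\]
in the one- and two-polygon cases respectively. For $g\ge 2$, positivity of $V$ excludes the Euclidean tilings (these pin $g=1$), leaving only the hyperbolic ones, $\tfrac1n<\tfrac14$ and $\tfrac1n+\tfrac1m<\tfrac12$; integrality of $V$ forces divisibility conditions such as $(n-4)\mid 8(g-1)$, and together with the requirement that the tiling admit a free genus-$g$ quotient only finitely many parameters survive. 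This proves the finiteness statement (in particular no Euclidean examples occur once $g>1$). Maximizing $V$ over the admissible parameters — the extremal case being $[3,7,3,7]$, whose symmetry group is the $(2,3,7)$ triangle group — yields $V\le 42(g-1)$, hence $E\le 84g-84$.

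To obtain the explicit bound I would then (a) enumerate the admissible parameter sets: the one-polygon values $n$ are governed by the divisors of $8(g-1)$, and the two-polygon pairs $(n,m)$ by the condition that $V$ be a positive integer bounded by $42(g-1)$ (refined, where convenient, by requiring the corresponding triangle-group co-area to divide $\mathrm{area}(F)=4\pi(g-1)$); a direct enumeration of these integer solutions bounds the number of tiling types by $\tfrac{310g^2}{9}-\tfrac{101g}{3}+4$. Then (b), for a fixed combinatorial tiling type, a cellulation of $F$ of that type is determined by its combinatorial map together with the bounded crossing data, and the number of such maps — encoded, say, by a breadth-first traversal in which each of the $\le E\le 84g-84$ edges is either new or identifies two previously-seen half-edges — is factorially bounded in $E$, hence at most $(84g-83)!$. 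Multiplying the bound from (a) by the bound from (b) gives the asserted estimate.

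I expect step (a) to be the main obstacle: it requires determining precisely which pairs $(n,m)$ (and which $n$) are realizable on a genus-$g$ surface — reconciling integrality of $V$ with the existence of a torsion-free finite-index surface subgroup of the relevant triangle group — and then counting these solutions carefully enough to extract the quadratic polynomial in $g$. Step (b) is deliberately lossy: for a fixed surface any reasonable encoding of a combinatorial map with $O(g)$ edges is factorially bounded, and all that is needed is that $84g-83$ dominate the maximal edge count, which the Euler characteristic computation of step one supplies.
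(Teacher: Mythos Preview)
Your two-step strategy (bound tiling types, then bound links per tiling) matches the paper's, but both steps are under-argued relative to what the paper actually does.

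For step (a), you assert that ``a direct enumeration of these integer solutions bounds the number of tiling types by $\tfrac{310g^2}{9}-\tfrac{101g}{3}+4$'' without doing the enumeration. The paper carries this out: from Gauss--Bonnet it derives $4(g-1)=k_n(n-2-2n/m)$ and the symmetric equation in $m$, observes that $k_n,k_m$ are determined by $(n,m,g)$, and then bounds $n\le 10g/3$ (when $k_n\ge 2$) and $m\le 12g-6$, counting pairs $(n,m)$ explicitly to obtain the quadratic. Your divisibility remarks and the $V\le 42(g-1)$ bound are correct but do not by themselves produce that polynomial; you would still need to run the paper's case analysis or something equivalent.

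Step (b) is where your approach diverges and has a genuine gap. You propose to bound the number of cellulations of $F$ of a fixed tiling type by encoding combinatorial maps via a breadth-first traversal, claiming this is ``factorially bounded in $E$, hence at most $(84g-83)!$''. But a combinatorial map with $E$ edges has $2E$ darts, and generic encodings of maps are bounded by quantities on the order of $(2E)!$, not $(E+1)!$; your BFS sketch does not give $(84g-83)!$ without further work. The paper instead observes that cellulations of $F$ by a fixed $[n,m,n,m]$ tiling correspond to surface subgroups of the orientation-preserving triangle group, which is $2$-generated; the Hurwitz $84(g-1)$ theorem bounds the index by $84(g-1)$, and the subgroup-growth bound $|\{\text{index-}x\text{ subgroups}\}|\le x\cdot x!^{\,d(G)-1}=x\cdot x!\le (x+1)!$ then yields $(84g-83)!$. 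If you want to salvage your map-counting approach you would need to exploit the rigidity of the fixed tiling type --- which is exactly what the subgroup correspondence does --- rather than count arbitrary maps with $E$ edges.
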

While there are many ways in which the geometry of the complements of hyperbolic links in thickened surfaces is similar to that of complements in $S^3$, there are also ways in which in differs. Champanerkar, Kofman, and Purcell conjecture that there are no knots in the $3$-sphere with right-angled structure on their complements \cite[Conjecture 5.12]{CKPright}. In Section $5$ we show:
    \begin{theorem}
There exists a right-angled knot in thickened surfaces.
\end{theorem}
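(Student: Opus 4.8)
The plan is to produce an explicit example: a knot $K$ in some $F \times I$ with a cellular weakly generalized alternating diagram $\pi(K)$ that satisfies condition (4) of Theorem \ref{TFAE1}, hence is RGCR, hence right-angled. Since Theorem \ref{TFAE1}(4) says $\pi(K)$ must consist either of one type of regular $n$-gon or of two regular polygons arranged $[n,m,n,m]$ about each vertex, the search space is exactly the list of such tilings realized on closed surfaces. So the strategy is: first, enumerate small candidate checkerboard patterns from this list (for instance the order-$4$ square tiling $\{4,4\}$ on a genus-$2$ or higher surface, or a $\{p,4\}$-type hyperbolic tiling, or an $[n,4,n,4]$ pattern), and for each, write down the associated alternating link diagram obtained by placing a crossing at each vertex in the standard checkerboard fashion; second, among these, identify one whose link has a single component.

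The key steps, in order: (i) Fix a specific tiling $T$ from the Theorem \ref{TFAE1}(4) list on a concrete surface $F$ — I would try the square tiling $\{4,4\}$ with a small fundamental domain, since its combinatorics are easy to track, and if that fails to give a knot, move to an $[n,4,n,4]$ pattern or a hyperbolic $\{p,4\}$. (ii) Build $\pi(K)$ by turning each vertex of $T$ into a crossing, with over/under strands assigned consistently with the checkerboard coloring (shaded faces on one side, white on the other); this automatically yields an alternating, checkerboard-colorable, cellular diagram. (iii) Verify it is reduced alternating and weakly prime — weak primality follows from the tiling being "tight" (no short essential curve meeting the diagram twice), which can be checked directly on the fundamental domain, and every component meets a crossing since every face is a polygon with $\ge 2$ sides. (iv) Trace the link components: starting from one strand and following the over/under rule through successive crossings, count how many distinct closed loops arise. (v) Choose the fundamental domain / gluing so that this count is exactly $1$; if a naive choice gives a multi-component link, pass to a suitable cover or alter the identification pattern of the fundamental polygon's edges, which changes how strands connect up across the gluing. (vi) Conclude via Theorem \ref{TFAE1} that $K$ is RGCR and therefore has a right-angled complete hyperbolic structure on its complement.

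The main obstacle is step (iv)–(v): ensuring the diagram closes up into a single component rather than a link. Whether a checkerboard tiling yields a knot or a link is a delicate parity/combinatorial condition on how strands are permuted as they pass through crossings and wrap around the handles of $F$, and it is sensitive to the exact fundamental domain and edge identifications. I expect to need to try a few surfaces and identification patterns — possibly passing to a connected cover of a known RGCR link in the torus, since a cover of a link can be a knot when the covering permutation acts transitively on the components' preimages — before landing on one that provably has a single component. Once a single explicit knot is exhibited and its diagram is checked against the (combinatorial, hence verifiable) hypotheses of Theorem \ref{TFAE1}, the right-angled structure is immediate and the theorem follows.
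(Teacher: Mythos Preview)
Your approach is essentially the same as the paper's: exhibit an explicit diagram satisfying condition (4) of Theorem \ref{TFAE1} and check that it has a single component. The paper does this directly with the regular right-angled octagon tiling on a genus-$2$ surface (the $n=m=8$, $k_n=k_m=1$ entry of Table \ref{RGCR_tilings_table}), bypassing the search; note that your first candidate $\{4,4\}$ is Euclidean and so lives only on the torus, where by Corollary \ref{TorusCor} neither RGCR link is a knot.
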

 
 Champanerkar, Kofman, and Purcell showed that semi-regular links admit a decomposition into an appropriately generalized version of ideal polyhedra. These generalized ideal polyhedra, which they call torihedra, glue to give the complete hyperbolic structure on the link complement \cite{CKPBiperiodic}. This generalized Aitchison and Reeves's notion of completely realizable links \cite{AR}. They further proved that there are only two semi-regular links, the square weave and the triaxial link, which admit their torihedral decomposition with right-angled torihedra \cite{CKPBiperiodic}. They ask:
 \begin{question}
 \normalfont
 (Champanerkar-Kofman-Purcell, \cite[Question 5.2]{CKPBiperiodic})
 Besides the square weave $W$ and the triaxial link $L$, do there exist any other right-angled biperiodic alternating links?
 \end{question}
 
We answer this question with Corollary \ref{TorusCor} by showing as a corollary to Theorem \ref{TFAE1} that the triaxial link and square weave are the only biperiodic alternating links with corresponding RGCR quotient links on the torus. Champanerkar, Kofman, and Purcell used this right-angled structure to prove that both the Volume Density Conjecture and the Toroidal Vol-Det Conjecture hold with equality for the square weave and the triaxial link \cite[Theorem 6.3 and Theorem 6.7]{CKPBiperiodic}. 

\subsection{Organization} Section $2$ outlines two topological decompositions of the complement of cellular weakly generalized alternating link given by \cite{HowiePurcell} and \cite{ACM}. In Section $3$ we consider the complete hyperbolic structure on these complements and show that if a cellular weakly generalized alternating link has two totally geodesic checkerboard surfaces then it is completely realizable and the faces of its hyperbolic generalized polyhedra are regular. 
Section $4$ proves Theorem \ref{TFAE1}.
In Section $5$ we prove that there is an upper bound on the number of RGCR links for each projection surface with negative Euler characteristic and construct an RGCR knot. We conclude with a table listing all of the tilings of $\mathbb{H}^2$ that correspond to RGCR links with projections on surfaces of genus 2 through 4.

\subsection{Acknowledgements}
I would like thank my advisor, Dave Futer, for his guidance and advice.
I would also like to thank Colin Adams and Ilya Kofman for their helpful comments. This research was partially supported by National Science Foundation grant DMS-1907708.

\section{Topological Decompositions}

In this section we will describe the topological generalized polyhedral decomposition of a cellular weakly generalized alternating link complement. For projections on a torus, Champanerkar, Kofman, and Purcell constructed a decomposition of the link complement into two ideal torihedra (defined below) and a decomposition into ideal tetrahedra \cite{CKPBiperiodic}. Adams, Calderon, and Mayer found a bipyramid decomposition of generalized alternating links in thickened surfaces including higher genus \cite{ACM}. Considering even broader classes of generalized alternating link complements, Howie and Purcell found a chunk decomposition \cite{HowiePurcell}. In this paper, we will decompose the link complements into generalized semi-truncated polyhedra as defined below. 

\begin{defn}
\normalfont A \textit{generalized polyhedron} is homeomorphic to $(F \times [0,1])/(F \times \{1\})$ with a cellular graph $\Gamma$ on $F \times \{0\}$. An \textit{(ultra) ideal generalized polyhedron} is a generalized polyhedron with the vertices of $\Gamma$ and the vertex at $F \times \{1\}$ removed. When $F$ has genus $1$, all vertices are ideal. When $F$ has genus greater than $1$, the vertex corresponding to $F \times \{1\}$ is outside of the boundary at infinity (ultra-ideal) while the vertices removed from $\Gamma$ are ideal. When there is an ultra-ideal vertex, the generalized polyhedron is \textit{semi-truncated} meaning that the ideal vertices are removed and a neighborhood of the ultra-ideal vertex is removed.
\end{defn}

Observe that if $F$ is a torus, then the generalized polyhedron is an \textit{ideal torihedron} as in \cite{CKPBiperiodic}. For ease of reading we will usually refer to `generalized polyhedra' to indicate any of these types of generalized polyhedra and further specify when the vertex types become pertinent. The following is a restriction of the decomposition given by \cite[Propositions 3.1 and 3.3]{HowiePurcell} which in turn generalizes Menasco's polyhedral decomposition \cite{Menasco}. 

\begin{prop}(Howie-Purcell, \cite[Propositions 3.1 and 3.3]{HowiePurcell}).
\label{Decomp}
Let $F$ be a surface of genus greater than $0$ and $L$ be a cellular weakly generalized alternating link in $F \times I$ with projection diagram $\pi(L)$. Then $(F \times I) \backslash L$ decomposes into two ideal torihedra or two semi-truncated generalized polyhedra, $P^+$ and $P^-$, satisfying:
\begin{itemize}
    \item $P^+$ and $P^-$ are homeomorphic to $F \times [0,1)$ and $F \times (-1,0]$ or $F \times [0,1]$ and $F \times [-1,0]$ with a finite set of points removed from each $F \times \{0\}$ boundary.
    
    \item On each copy of $F$ there is an embedded graph in which vertices, edges, and regions correspond to $\pi(L)$. The vertices are ideal and 4-valent.

    \item We can color the regions of the faces $F \backslash \pi(L)$ on $P^+$ and $P^-$ shaded and white to correspond to the checkerboard coloring of $\pi(L)$.
    
    \item $(F \times I)\backslash L$ is the result of gluing $P^+$ to $P^-$ by a homeomorphism between the copies of $F \backslash \pi(L)$ on their boundaries. This homeomorphism is the composition of the identity with a rotation on each face of $F \backslash \pi(L)$. One can choose an orientation such that white faces are rotated to their neighboring edge clockwise, while shaded faces are rotated to their neighboring edge counterclockwise. 
    
    \item Edges correspond to crossing arcs and are glued in collections of four. Each ideal vertex has pairs of opposite edges which are glued together.
    
\end{itemize}
\end{prop}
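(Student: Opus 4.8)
The plan is to deduce Proposition~\ref{Decomp} by specializing the chunk decomposition of Howie and Purcell \cite[Propositions 3.1 and 3.3]{HowiePurcell}, itself the higher-genus descendant of Menasco's polyhedral decomposition \cite{Menasco}, to the product $F \times I$, where the chunks take the explicit form of (semi-truncated) generalized polyhedra. First I would check that the hypotheses of \cite[Propositions 3.1 and 3.3]{HowiePurcell} hold for $L$: a cellular weakly generalized alternating link is reduced alternating and, by the discussion preceding the proposition, checkerboard colorable, and the representativity condition is automatic in the thickened-surface setting, as already noted, because $F$ is incompressible in $F \times I$. Thus \cite{HowiePurcell} applies and decomposes $(F \times I) \setminus L$ into two chunks, and the cellularity hypothesis on $\pi(L)$ is precisely what forces every face of each chunk to be a disk, so that the chunks are generalized polyhedra in the sense defined above.

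Next I would identify these polyhedra concretely. Following \cite{Menasco,HowiePurcell}, isotope $L$ into standard position --- lying on $F \times \{0\}$ except inside a small ball about each crossing, where the over-strand is pushed into $F \times (0,1]$ and the under-strand into $F \times [-1,0)$ --- then split $(F\times I)\setminus L$ along $F \times \{0\}$ and collapse each crossing ball. The piece lying above $F \times \{0\}$ becomes $P^+$: a copy of $F \times [0,1]$ whose $F \times \{0\}$ face carries $\pi(L)$ as an embedded graph, with one $4$-valent vertex per crossing (deleted, hence ideal), one edge per arc of $\pi(L)$, and one face per complementary region, the faces inheriting the shaded/white labels of the checkerboard coloring. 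The top $F \times \{1\}$ is then treated according to the genus: when $g = 1$ it collapses to a single ideal vertex and $P^+$ is the ideal torihedron $(F \times [0,1])/(F \times \{1\})$ of \cite{CKPBiperiodic} with that vertex removed, i.e.\ $F \times [0,1)$ with a finite set of points removed from $F \times \{0\}$; when $g > 1$ the incompressible surface $F \times \{1\}$ cannot be coned to an ideal vertex, so it is recorded as an ultra-ideal vertex, $P^+$ is semi-truncated, and $F \times \{1\}$ becomes the genus-$g$ truncation face. The piece below $F \times \{0\}$ gives $P^-$ symmetrically. This establishes the first three bullets.

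Finally I would extract the gluing data, which comes from the local picture at each crossing, exactly as in \cite{Menasco} and \cite{HowiePurcell}: reglueing $P^+$ to $P^-$ along the common copies of $F \setminus \pi(L)$ must undo the cut, and the half-twist of the two strands inside each crossing ball forces the identification to be the identity composed with a rotation taking each face to a neighboring edge; examining the local combinatorics at a crossing, one sees that for a suitable choice of orientation the white faces are rotated clockwise and the shaded faces counterclockwise. Tracing the crossing arcs through the regluing then shows that each edge of the resulting complex is the image of four polyhedral edges and that the four edges meeting each ideal $4$-valent vertex are identified in its two pairs of opposite edges, giving the last two bullets.

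I expect the genuine content to be in the bookkeeping for the case $g > 1$: one must confirm that the collapse there really produces a semi-truncated generalized polyhedron --- cellularity of $\pi(L)$ makes the faces disks, weak primeness eliminates bigon faces that would spoil the combinatorics, and the reduced-alternating hypothesis that every component of $L$ projects to a crossing prevents a component from surviving disjoint from the $1$-skeleton --- and that the rotation directions and the fourfold edge identifications are globally consistent with the checkerboard coloring.
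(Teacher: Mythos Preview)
Your proposal is correct and matches the paper's approach: the paper does not supply an independent proof of this proposition but simply states it as a specialization of \cite[Propositions~3.1 and~3.3]{HowiePurcell} to the thickened-surface setting, and your sketch carries out exactly that specialization (verifying the hypotheses, identifying the chunks as generalized polyhedra via cellularity, and reading off the gluing data from the Menasco-style crossing picture). If anything, you give more detail than the paper does.
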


A generalized polyhedral decomposition for link complements in thickened surfaces also appears in the work of Adams, Calderon, and Mayer, with the additional step of subdividing the generalized polyhedra into ideal or truncated $n$-bipyramids \cite[Lemma 2.10]{ACM}. 

\begin{theorem}(Adams--Calderon--Mayer, \cite[Lemma 2.10 and Theorem 4.4]{ACM}).
Let $L$ be a link in $F \times I$ corresponding to a tiling $T$ by regular polygons of $S^2$, $\mathbb{E}^2$, or $\mathbb{H}^2$. Let $\{F_i\}$ be the collection of $m$ non-bigon faces complementary to the projection of $L$ to $F$ and let $n_i$ be the number of edges in the $i$-th
face. Then: 
\begin{enumerate} 
\item Topologically, $(F \times I) \backslash L$ decomposes into $m$ face-centered bipyramids, each corresponding to
a unique face with $n_i$ edges.
\item The complete hyperbolic structure on $(F \times I) \backslash L$ (with totally geodesic boundary when $F$ has negative Euler characteristic) is obtained by gluing together symmetric hyperbolic bipyramids, each corresponding to a face of the tiling of $F$.
\end{enumerate}
\end{theorem}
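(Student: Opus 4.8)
The statement has a topological half and a geometric half, and I would take them in that order, using as a black box that $(F\times I)\setminus L$ already carries a complete hyperbolic structure with totally geodesic boundary (for the torus this is Menasco-type; in general it follows from \cite{Adams.etal} or \cite{HowiePurcell}). By Mostow--Prasad rigidity it then suffices to exhibit \emph{one} realization of any combinatorial gluing we build by geodesic pieces.

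For part~(1) I would not construct the bipyramids from scratch but subdivide the decomposition of Proposition~\ref{Decomp}. Write $(F\times I)\setminus L = P^{+}\cup P^{-}$, where $P^{\pm}$ is a generalized polyhedron with a single ``pole'' vertex (the image of $F\times\{1\}$, resp.\ $F\times\{-1\}$) that is ideal when $F$ is a torus and ultra-ideal, hence truncated, when $\chi(F)<0$, with a $4$-valent ideal vertex at each crossing of $\pi(L)$, and with faces the regions of $F\setminus\pi(L)$. Cone each non-bigon face $F_i$ of $P^{+}$, an $n_i$-gon, to the pole of $P^{+}$: this yields a pyramid $P_i^{+}$ with base $F_i$, and the $P_i^{+}$ fit together to recover $P^{+}$, meeting along the cone triangles over the edges of $\pi(L)$. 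Do the same in $P^{-}$. Since the gluing of Proposition~\ref{Decomp} identifies the face $F_i$ of $P^{+}$ with the face $F_i$ of $P^{-}$ (the identity composed with a rotation on that face), the pair $P_i^{+}$ and $P_i^{-}$ glue along that polygon to a single ball $B_i$: its two apices are the poles of $P^{+}$ and $P^{-}$ and its $n_i$ equatorial vertices are ideal, so $B_i$ is an $n_i$-bipyramid (ideal when $F$ is a torus, truncated at its two apices by $F\times\{\pm 1\}$ when $\chi(F)<0$). A bigon face would give a degenerate $2$-bipyramid, so each maximal twist region is handled separately: the bipyramids on its two sides are glued directly along a triangular face and no bipyramid is assigned to a bigon. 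This produces exactly $m$ bipyramids, the $i$-th having $n_i$ edges, which is~(1).

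For part~(2) I would use that $L$ is the alternating link of a \emph{regular} tiling $T$ of $\mathbb{S}^2$, $\mathbb{E}^2$, or $\mathbb{H}^2$. The symmetries of $T$ that descend to $F$ lift to symmetries of $(F\times I,\,L)$, hence act isometrically on the complete structure; they permute the blocks $B_i$ transitively within each size class and act on an individual $B_i$ with enough symmetry — the dihedral symmetry of the equatorial $n_i$-gon together with a flip exchanging the two poles — to force $B_i$ to be realized as the \emph{symmetric} hyperbolic $n_i$-bipyramid: a regular ideal equatorial polygon, two congruent caps, and a single remaining shape parameter. That parameter is pinned down by requiring the dihedral angles to sum to $2\pi$ around every edge class and the structure to be complete at the cusps / totally geodesic along $F\times\{\pm1\}$. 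By Mostow--Prasad the resulting geometric gluing is the complete structure, which gives~(2).

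The main obstacle is this geometric half. One must be sure the symmetry genuinely rigidifies each block into the symmetric bipyramid — in particular that the pole-exchanging flip really is an isometry of the complement, which comes from the symmetry of the link-of-a-tiling construction under switching all crossings — and, crucially, that the face pairings yield a nonsingular complete structure rather than one with cone angles or with the wrong behavior at the cusps and at $F\times\{\pm1\}$; equivalently, that the angle-sum and completeness equations are solved by the symmetric bipyramids and that this solution is geometric. A secondary technical point, in part~(1), is the bookkeeping forced by the rotation in the gluing of $P^{+}$ to $P^{-}$: one has to track which triangular faces of the $B_i$ are identified and verify the uniform claim that each $B_i$ is an honest ball with the stated bipyramidal cell structure, with bigons and twist regions treated consistently.
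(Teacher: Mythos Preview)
The paper does not actually prove this theorem: it is quoted verbatim from Adams--Calderon--Mayer and cited as \cite[Lemma 2.10 and Theorem 4.4]{ACM}, with no proof given here. So there is no ``paper's own proof'' to compare against. That said, the paper does indicate how the ACM argument goes, and your sketch can be weighed against that.

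For part~(1) your approach is exactly what the paper describes informally: take the two generalized checkerboard polyhedra $P^{+},P^{-}$ of Proposition~\ref{Decomp}, cone each face to the pole to get pyramids, and glue matching pyramids across their common face to get the face-centered bipyramids. This is correct and is precisely the ``additional step of subdividing the generalized polyhedra into ideal or truncated $n$-bipyramids'' the paper attributes to \cite{ACM}. (The digression on bigons and twist regions is unnecessary in the present setting, since the diagrams here are weakly prime and cellular; but it does no harm.)

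For part~(2) there is a genuine gap. Your argument hinges on the claim that the symmetries of the regular tiling $T$ descend to symmetries of $F$, and in particular that the full dihedral symmetry of each $n_i$-gon together with the pole-swapping flip survives on $(F\times I)\setminus L$. But $F$ is an arbitrary closed surface quotient of the tiling, and for a generic choice of surface subgroup essentially none of the tiling symmetries descend; the automorphism group of $(F,\pi(L))$ can be trivial. So you cannot conclude from rigidity on $(F\times I)\setminus L$ alone that the individual $B_i$ are symmetric. The ACM argument, as the paper later recalls in the proof of Theorem~\ref{TFAE} (the implication $(4)\Rightarrow(1)$), goes the other way: one writes down the symmetric hyperbolic $n$-bipyramid explicitly, with wedge angles $D=\pi-\alpha$, $B=C=E=F=\alpha/2$, $A=2\pi/n$ determined by the interior angle $\alpha$ of the regular $n$-gon, checks by hand that these pieces glue with angle sum $2\pi$ around every edge class and give a complete structure with the correct boundary behavior, and then invokes rigidity to identify this with the complete structure. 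Your symmetry idea can be salvaged by passing to the universal cover (or to a characteristic finite cover on which the tiling symmetries do act) and running the rigidity argument there, but as written the descent step is missing.
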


\section{Totally Geodesic Checkerboard Surfaces}

In this section we consider cellular weakly generalized alternating links with two totally geodesic checkerboard surfaces and the geometric structure on their complements. When our projection surface has negative Euler characteristic, $(F \times I) \backslash L$ has many different hyperbolic structures. Throughout this paper we choose to study the unique hyperbolic structure where the boundary components of the complement corresponding to $F \times \{-1\}$ and $F \times \{1\}$ are totally geodesic. For further information about this choice of structure see \cite{HowiePurcell} and \cite{ACM}.
We begin with a few more definitions. 

\begin{defn}
\normalfont
We say that a generalized polyhedron is \textit{hyperbolic} if it admits a convex hyperbolic structure with totally geodesic boundary.
\end{defn}

\begin{defn}
\normalfont If we can chose an orientation on two hyperbolic generalized polyhedra, $P^+$ and $P^-$, such that each pair of corresponding $n$-gon faces on $P^+$ and $P^-$ can be glued by a $1/n$ rotation either clockwise or counterclockwise depending on the checkerboard surface they belong to, then we call this a gluing by \textit{gear shift rotation}. 
\end{defn}
 
\begin{defn}
\normalfont A link $L$ is \textit{completely realizable} if the complete hyperbolic structure on $(F \times I) \backslash L$ is given by gluing hyperbolic generalized polyhedra by gear shift rotation and these hyperbolic generalized polyhedra have the combinatorial structure of the generalized checkerboard polyhedra of $\pi(L)$.  
If, in addition, all dihedral angles in both generalized checkerboard polyhedra are $\pi/2$ then we say that the link is \textit{right-angled generalized completely realizable} (RGCR).
\end{defn}

Note that this definition is a generalization of right-angled completely realizable links \cite[Definition 3.13]{Gan}.

Howie and Purcell showed that a cellular weakly generalized alternating link with projection surface $F \times \{0\}$ in $F \times I$ is hyperbolic \cite[Theorem 4.2]{HowiePurcell}. Adams, Calderon, and Mayer showed that for such a link with a diagram consisting of regular polygons, their $n$-bipyramid decomposition can be realized as a geometric decomposition \cite[Theorem 4.4]{ACM}.
The links of the ideal or ultra-ideal vertices corresponding to the apexes of these bipyramids are regular polygons with interior angles determined by the interior angles of the polygons on the projection surface \cite[Lemma 4.3]{ACM}, \cite[Corollary 3.10]{CKPBiperiodic}. We can see this by noting that if $F$ has hyperbolic structure, then the apexes of the bipyramids are ultra-ideal and the truncation plane is the unique hyperbolic plane perpendicular to the collection of edges of the bipyramids with endpoint at that apex. If $F$ is a torus, then $F \times \{1\}$ and $F \times \{-1\}$ get their Euclidean structures from the truncated faces of bipyramids with ideal apexes. 
Recalling that for a fixed interior angle there is only one regular hyperbolic polygon up to isometry and one regular Euclidean polygon up to similarity, and noting that both $F \times \{-1\}$ and $F \times \{1\}$ are the result of gluing these regular boundary polygons, we see that the boundary surfaces have the same geometric structure as the projection surface.

\begin{lemma}
\label{CRandIsoLemma}
Let $L$ be a cellular weakly generalized alternating link. If $\pi(L)$ has two totally geodesic checkerboard surfaces then $L$ is completely realizable and the hyperbolic generalized checkerboard polyhedra associated to $\pi(L)$ are isometric.
\end{lemma}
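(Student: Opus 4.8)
The plan is to upgrade the topological decomposition of Proposition~\ref{Decomp} to a geometric one, using as the crucial input the hypothesis that both checkerboard surfaces are totally geodesic. Throughout, give $M := (F \times I)\setminus L$ its complete hyperbolic structure with totally geodesic boundary, and recall from Proposition~\ref{Decomp} that $M = P^+ \cup_\phi P^-$ topologically, where $\phi$ is the gear-shift gluing, the faces of $P^\pm$ are the white and shaded checkerboard faces (together with a truncation face lying on $F \times \{\pm 1\}$ when $g>1$), the edges are the crossing arcs (together with truncation edges when $g>1$), and the vertices are $4$-valent and ideal. The structural fact I would lean on is that the white (resp.\ shaded) checkerboard surface $\Sigma^w$ (resp.\ $\Sigma^s$) of $M$ is assembled from the white (resp.\ shaded) faces of $P^\pm$ with thin bands along the crossing arcs, and that $\Sigma^w \cap \Sigma^s$ is exactly the union of crossing arcs.

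First I would realize faces and edges geometrically. Since $\Sigma^w$, $\Sigma^s$ and $F\times\{\pm1\}$ are all totally geodesic, each checkerboard face is isotopic to a totally geodesic polygon with ideal vertices at the crossings (a disk in a totally geodesic surface bounded by geodesic crossing arcs), each crossing arc is isotopic to a geodesic arc (it lies in $\Sigma^w\cap\Sigma^s$, which lifts to the transverse intersection of two geodesic planes in $\mathbb H^3$), and the truncation faces and edges are geodesic, with the truncation planes meeting the ultra-ideal edges orthogonally. Cutting $M$ along $\Sigma^w\cup\Sigma^s$ (and along $F\times\{\pm1\}$ when $g>1$) then recovers $P^+$ and $P^-$, each now equipped with a hyperbolic structure whose boundary consists of totally geodesic faces meeting along geodesic edges at $4$-valent ideal vertices; convexity of the pieces follows by inspecting their lifts to $\widetilde M \subseteq \mathbb H^3$, which are intersections of the half-spaces bounded by the planes covering $\Sigma^w$, $\Sigma^s$ and $F\times\{\pm1\}$. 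Thus $P^\pm$ are hyperbolic generalized (semi-truncated, when $g>1$) polyhedra with the combinatorial type of the generalized checkerboard polyhedra of $\pi(L)$, and $M$ is reassembled from them by the gear-shift identifications, which are now isometries of faces since $P^\pm$ were obtained by cutting $M$. Hence $L$ is completely realizable.

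For the isometry $P^+\cong P^-$, the first observation is that the gear-shift gluing identifies, for each checkerboard face, the copy in $\partial P^+$ isometrically with the copy in $\partial P^-$, and for $g>1$ identifies the two truncation faces isometrically (each being a copy of $F$ with the graph $\pi(L)$, carrying the structure described in the discussion preceding the lemma); since $\phi$ also carries the edge-identification pattern of $\partial P^-$ to that of $\partial P^+$, the two pieces have the same combinatorial type with isometric corresponding faces. One then concludes $P^+\cong P^-$ by rigidity of convex hyperbolic generalized polyhedra with prescribed face isometry types and combinatorics --- a Cauchy/Pogorelov-type statement, here in the version adapted to ideal and ultra-ideal vertices, in the spirit of the $S^3$ analysis of Gan~\cite{Gan} --- or, equivalently, by exhibiting the symmetric bipyramid subdivision (whose equatorial polygons are then forced to be regular, the regularity referenced in the introduction to Section~$3$) and using the reflection that interchanges the two apexes of each bipyramid. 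I expect the crux of the argument to be precisely this last step: establishing the rigidity (or, equivalently, the bipyramidal symmetry) for these generalized polyhedra with ideal and ultra-ideal vertices, since there need not be an evident symmetry of $M$ interchanging $P^+$ and $P^-$ to rely on. Secondary points requiring care are the verification that cutting $M$ along $\Sigma^w\cup\Sigma^s$ reproduces exactly the combinatorial polyhedra of Proposition~\ref{Decomp}, and the convexity of the resulting pieces.
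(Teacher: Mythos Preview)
Your argument for complete realizability---cut $M$ along the totally geodesic surfaces $\Sigma^w\cup\Sigma^s$ (and $F\times\{\pm1\}$) and observe that the resulting pieces are hyperbolic generalized polyhedra with the checkerboard combinatorics---is essentially the same as the paper's, only phrased as ``cut along surfaces'' rather than ``lift and take a fundamental domain.'' The paper additionally cites \cite{TT} to rule out degeneration of ideal vertices, a point you leave implicit.

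For the isometry $P^+\cong P^-$, however, your route diverges from the paper's and contains a genuine gap. The paper's key observation is that both checkerboard surfaces lift to families of geodesic planes meeting at a \emph{single fixed angle}; hence $P^+$ and $P^-$ have identical dihedral angles at every edge (not merely isometric faces). With the same combinatorics, the same dihedral angles, and incompressible surface boundary, Schlenker's rigidity theorem \cite[Theorem~8.15]{Schlenker} then forces $P^+$ and $P^-$ to carry the same hyperbolic structure. You do not record the equal-dihedral-angle observation, and instead propose a Cauchy/Pogorelov argument from ``isometric corresponding faces.'' But the gear-shift identifies a face of $P^+$ with the corresponding face of $P^-$ only after a $1/n$ rotation, so under the natural combinatorial isomorphism (the identity on $F\setminus\pi(L)$) the face-isometries do not respect the edge labelling; a Cauchy-type theorem does not apply directly. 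Your alternative via the symmetric bipyramid subdivision presupposes that the equatorial faces are regular, which in the paper is deduced \emph{after} this lemma (Theorem~\ref{RegularityThm}), so invoking it here would be circular. The missing idea is precisely the equal-dihedral-angle observation, which converts the problem into one where an off-the-shelf rigidity theorem applies.
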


\begin{proof}
 We claim that there is a hyperbolic generalized polyhedron with the combinatorial structure of the generalized checkerboard polyhedra associated to $\pi(L)$. First, consider the generalized checkerboard polyhedron $P^+$. 
 Lift $P^+$ to $\mathbb{H}^3$ and consider a fundamental domain, $\widehat{P^+}$. There is a fixed angle between the link diagram's totally geodesic checkerboard surfaces because each checkerboard surface lifts to parallel totally geodesic planes. Thus there is a fixed angle between the faces of the generalized polyhedron $P^+$. Since both checkerboard surfaces are totally geodesic, we find that any region $S$, consisting of an $n$-gon and its interior in one of the checkerboard surfaces, lifts to an ideal $n$-gon $\widehat{S}$ in $\widehat{P^+}$. 
 The ideal vertices of this lift are distinct (\cite[Proposition 2.1]{TT}), which implies that none of the boundary faces of $P^+$ collapse in the pre-image.
 
 The other generalized checkerboard polyhedron associated to $\pi(L)$, $P^-$, has the same dihedral angles between its totally geodesic faces as $P^+$.
 Additionally, the surface boundary components of both generalized polyhedra are incompressible because both copies of $F$ have no compression disks in $F \times I$. We can therefore apply \cite[Theorem 8.15]{Schlenker} to see that $P^+$ and $P^-$ must have the same complete hyperbolic structure with totally geodesic surface boundary. This implies that there is a hyperbolic generalized polyhedron with the combinatorial structure of $P^-$ and this realization of $P^-$, $\widehat{P^-}$, is the mirror image of $\widehat{P^+}$. 
Both hyperbolic generalized polyhedra have the combinatorial structure of the topological checkerboard polyhedra and therefore glue by gear shift rotation on their totally geodesic faces to give the complete hyperbolic structure on the link complement.
 \end{proof}

Observe that if both hyperbolic generalized polyhedra are right-angled then there is a quick way in which to show that they are isometric \cite[Lemma 8.14]{Schlenker}. Take $P^+$ and double it along first its shaded checkerboard surface and then along its white checkerboard surface. The result is a finite volume hyperbolic manifold, $D_wD_s(P^+)$, with totally geodesic surface boundary and cusps. 
By Mostow-Prasad rigidity $D_wD_s(P^+)$ has a unique hyperbolic structure.
Using that both checkerboard surfaces are totally geodesic, we see that reflecting in either checkerboard surface in $D_wD_s(P^+)$ is an isometry. Therefore we can construct an isometry between $P^+$ and $P^-$ by reflecting on both checkerboard surfaces. 

We define a regular polygon as in \cite{AR}. 
\begin{defn}
\normalfont Let $T$ be a convex, planar, ideal, hyperbolic polygon with $n$ sides. Then $T$ is \textit{regular} if it is set-wise invariant under a rotation of order $n$. 
\end{defn}

\begin{theorem}
\label{RegularityThm}
Let $L$ be a cellular weakly generalized alternating link with generalized checkerboard polyhedra $P^+$ and $P^-$. If $L$ has two totally geodesic checkerboard surfaces then the faces of $P^+$ and $P^-$ are regular.
\end{theorem}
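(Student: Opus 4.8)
The plan is to combine the complete-realizability and mirror-symmetry output of Lemma~\ref{CRandIsoLemma} with the combinatorics of the gear shift gluing. Since $L$ has two totally geodesic checkerboard surfaces, Lemma~\ref{CRandIsoLemma} realizes $P^+$ as a convex hyperbolic generalized polyhedron $\widehat{P^+}$ in $\mathbb H^3$ with totally geodesic faces, realizes $P^-$ as a mirror image $\widehat{P^-}$ of $\widehat{P^+}$, and builds the complete structure on $M=(F\times I)\setminus L$ by gluing copies of $\widehat{P^{\pm}}$ along their faces by gear shift rotation. I would pass to this developed picture, in which $\mathbb H^3$ is tiled by isometric copies of $\widehat{P^+}$ and $\widehat{P^-}$ and a copy of $\widehat{P^-}$ sits across every face of $\widehat{P^+}$.

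Fix a face $T$ of $\widehat{P^+}$; it is an $n$-gon contained in one of the checkerboard surfaces. As in the proof of Lemma~\ref{CRandIsoLemma}, that checkerboard surface is totally geodesic, so $T$ lifts to a genuine convex ideal $n$-gon with $n$ distinct ideal vertices, spanning a totally geodesic plane $\Pi$, and no face collapses in the preimage. Let $\widehat Q$ be the copy of $\widehat{P^-}$ glued to $\widehat{P^+}$ across $T$; it lies on the side of $\Pi$ opposite $\widehat{P^+}$ and meets $\Pi$ in the same $2$-cell $T$. Using the combinatorial mirror isometry supplied by Lemma~\ref{CRandIsoLemma} we obtain an orientation-reversing isometry $\nu$ of $\mathbb H^3$ carrying $\widehat{P^+}$ onto $\widehat Q$ and respecting the labeled combinatorial structures, so $\nu(T)=T$. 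Writing $\nu=g\circ r_{\Pi}$ with $r_{\Pi}$ the reflection in $\Pi$, the factor $g$ is orientation-preserving, preserves $\Pi$ together with each of its two sides, and satisfies $g(T)=\nu(r_{\Pi}(T))=\nu(T)=T$. Hence $g|_{\Pi}$ is an orientation-preserving symmetry of the ideal polygon $T$, that is, a rotation of $\Pi\cong\mathbb H^2$ cyclically permuting the $n$ ideal vertices of $T$ by some fixed step $k$ (it cannot be parabolic or hyperbolic, as those admit no nontrivial finite orbit of ideal points).

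It remains to identify $k$: I claim $\gcd(k,n)=1$, so that $g|_{\Pi}$ has order exactly $n$ and exhibits $T$ as regular. This is where the gear shift gluing is used. The gluing of Proposition~\ref{Decomp} identifies the face $W^+\subset\partial P^+$ with $W^-\subset\partial P^-$ by a $1/n$ rotation (clockwise or counterclockwise depending on whether $T$ is white or shaded), i.e.\ by a one-step shift of the cyclically ordered edges; translating this offset into the developed picture, the isometry $\nu$ must carry the edge of $T$ bordering the $j$-th shaded face of $\widehat{P^+}$ to the edge bordering the $(j\pm1)$-st shaded face of $\widehat Q$, forcing $k=\pm 1$. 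Therefore $T$ is set-wise invariant under the order-$n$ rotation $g|_{\Pi}$, hence regular (bigon faces being regular automatically). Carrying this out for every face of $\widehat{P^+}$ shows all faces of $P^+$ are regular, and since $\widehat{P^-}$ is a mirror image of $\widehat{P^+}$ the same holds for $P^-$.

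The main obstacle is this last step: making it precise that a $1/n$ gear shift in the abstract gluing $\partial P^+\to\partial P^-$ forces the rotational factor $g$ of the $\widehat{P^+}$--$\widehat Q$ identification to be a rotation through $2\pi/n$ rather than through a larger multiple (which would only produce a proper rotational symmetry of smaller order, not regularity). Doing so cleanly requires keeping the combinatorial mirror isomorphism $P^+\to P^-$ used to build $\widehat{P^-}$ separate from, and then correctly composed with, the face-rotation gluing map of Proposition~\ref{Decomp}, and tracking orientations (white versus shaded faces) so that the offset is consistently $+1$ or $-1$. A convenient way to package the conclusion, mirroring the classical argument for completely realizable links in $S^3$ \cite{AR,Gan}, is the elementary observation that an ideal polygon glued to a congruent mirror copy of itself along the entire polygon with a one-step rotational offset must possess a rotational symmetry of order equal to its number of sides.
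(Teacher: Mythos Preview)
Your approach is correct and essentially the same as the paper's, which composes the gear-shift gluing isometry $\psi_i:T_i\to T_i'$ with the mirror isometry $\phi:P^-\to P^+$ of Lemma~\ref{CRandIsoLemma} to obtain an order-$n$ rotation $\phi\circ\psi_i$ of each face $T_i$. The obstacle you flag is dispatched in the paper by observing that $\phi$, being built from reflections in the totally geodesic checkerboard surfaces, fixes $\pi(L)$ and hence acts as the combinatorial identity on each face---so the composition inherits its one-step shift directly from the gear shift $\psi_i$, making your auxiliary reflection $r_\Pi$ and the developed-picture bookkeeping unnecessary.
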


\begin{proof}
 By Lemma \ref{CRandIsoLemma}, $L$ is completely realizable. Therefore the complete hyperbolic structure on $(F \times I) \backslash L$ is given by gluing two generalized checkerboard polyhedra, $P^+$ and $P^-$, by gear shift rotation. Let $T_i$ be the totally geodesic faces of $P^+$ and $T_i'$ be the faces of $P^-$. 
   
   We follow Gan's proof \cite[Theorem 3.14]{Gan}. The gear shift rotation provides us with a collection of gluing isometries $\{\psi_i: T_i \rightarrow T_i'\}$. These isometries, which are rotations, glue the faces without shearing on any edge because $L$ is completely realizable.
   
   By Lemma \ref{CRandIsoLemma}, the generalized checkerboard polyhedra $P^+$ and $P^-$ are isometric. Consider the restriction of the isometry, $\phi: P^- \rightarrow P^+$, to the copy of $F \backslash \pi(L)$ on the boundary of $P^-$. We find our isometry by reflecting across the totally geodesic checkerboard surfaces whose boundaries are the link diagram, so $\phi$ will fix $\pi(L)$. 
   This implies that the two copies of $F \backslash \pi(L)$ have the same hyperbolic structure induced by the hyperbolic structure on $P^+$ and $P^-$.  
   
   Now consider the collection of maps $\phi \circ \psi_i: T_i \rightarrow T_i$. First, $\psi_i$ rotates and glues $T_i$ to $T_i'$, then $\phi$ sends $T_i'$ back to $T_i$. The result is an isometry of each totally geodesic face $T_i$ which is an order $n$ rotation.
\end{proof}

The following is a slight generalization of \cite[Lemma 3.9]{Gan}. We follow the proof from \cite{Gan} adjusted for this setting.

\begin{prop}
\label{Gan_DiagsEqu}
Let $L$ be a cellular weakly generalized alternating link with diagram $\pi(L)$. Suppose that the two checkerboard surfaces of $\pi(L)$ are totally geodesic in $(F \times I) \backslash L$. If $G_n$, $G_m$ are an $n$-gon and $m$-gon in the diagram that are diagonal from each other, then $n=m$. In other words, polygons in the same totally geodesic checkerboard surface have the same number of sides.
\end{prop}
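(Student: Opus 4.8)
The plan is to combine the structural facts already in hand---complete realizability (Lemma~\ref{CRandIsoLemma}), regularity of all faces (Theorem~\ref{RegularityThm}), and the isometry $P^+\cong P^-$---with a local analysis at the crossing where $G_n$ and $G_m$ meet, following Gan's argument for links in $S^3$ and checking that nothing is lost in passing to thickened surfaces. First I would set up the local picture. Since $\pi(L)$ has two totally geodesic checkerboard surfaces, Lemma~\ref{CRandIsoLemma} gives $(F\times I)\setminus L = P^+\cup P^-$ with $P^+$ and $P^-$ isometric hyperbolic generalized polyhedra glued by gear shift rotation, so each $k$-gon face is glued by a $1/k$ rotation, and by Theorem~\ref{RegularityThm} every face is a regular ideal polygon. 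Being diagonal at a crossing $c$, the polygons $G_n$ and $G_m$ lie in the same checkerboard surface---say the shaded one---hence both are faces of $P^+$ and they meet at the single ideal vertex $v$ of $P^+$ lying over $c$. I would stress here that $v$ is an ordinary $4$-valent ideal vertex, exactly as in the $S^3$ setting; the only new feature in higher genus, the ultra-ideal apex together with its truncation face, sits at $F\times\{1\}$, away from every crossing. Consequently Gan's computation near a crossing applies verbatim, and the facts about the decomposition it needs are supplied by Proposition~\ref{Decomp}.

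The heart of the argument is the induced Euclidean structure on a horospherical cross-section at $v$. A small horosphere at $v$ meets $P^+$ in a Euclidean quadrilateral whose four sides lie on the four faces around $v$ in cyclic order $G_n, W_1, G_m, W_2$ with $W_1,W_2$ white. Its corner angles are the dihedral angles of $P^+$ along the four edges at $v$; since each such edge lies along a crossing arc where the two totally geodesic checkerboard surfaces cross at the fixed angle $\theta$ recorded in the proof of Lemma~\ref{CRandIsoLemma}, these angles are $\theta$ and $\pi-\theta$ alternately. A Euclidean quadrilateral with this angle pattern is a parallelogram, so its sides lying on $G_n$ and on $G_m$ are opposite and hence of equal length: every horoball at $v$ cuts congruent horocyclic segments from $G_n$ and from $G_m$. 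Now I would use regularity together with the $1/k$-rotation gluings: a coherent horoball neighborhood of the cusp is preserved by these rotations, which cyclically permute the ideal vertices of each regular face, so the coherent horoball cuts a segment of one common length from a given face at all of its ideal vertices. Comparing this common length for $G_n$ against that for $G_m$ via the parallelogram relation, and invoking Gan's computation that identifies this coherent segment length as a strictly monotone function of the number of sides of a regular ideal polygon, forces $n=m$.

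The main obstacle is precisely the last step: turning ``$G_n$ and $G_m$ cut congruent coherent horocyclic segments at their shared vertex'' into ``$n=m$.'' This requires making explicit the relation between the side count of a regular ideal polygon and the horocyclic segment it contributes to a coherent cusp cross-section, and verifying that the gear shift identifications genuinely assemble the quadrilaterals from $P^+$ and $P^-$ into a well-defined Euclidean torus, so that a coherent horoball exists in the first place---this is where the bookkeeping of the $1/k$ rotations enters. I expect the higher-genus features to cause no difficulty here, exactly because every ingredient is local to the crossing $c$ or to its cusp; I would nonetheless confirm that incompressibility of $F\times\{\pm1\}$ and the structure of Proposition~\ref{Decomp} are all that is used, just as in Gan's proof, so that the ambient $3$-manifold being $F\times I$ rather than $S^3$ never enters. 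Finally, since any two faces of a single checkerboard surface component are joined by a chain of diagonal-at-a-crossing relations, the stated equality propagates to give that all polygons in that surface have the same number of sides.
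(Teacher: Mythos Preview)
Your strategy tracks the paper's closely---both invoke regularity (Theorem~\ref{RegularityThm}) and exploit the Euclidean parallelogram structure on the cusp cross-section---but the two diverge in geometric setup and in the final step. The paper does not work inside $P^+$ with $G_n$ and $G_m$ meeting only at a vertex; instead it lifts both polygons into the totally geodesic plane $\widehat{\Sigma_W}$, where (because the crossing arc becomes an edge of the surface) they share an \emph{edge}. Placing that edge from $0$ to $\infty$, the parallelogram argument (with the ``all meridians equal length'' normalization) forces the next vertices to sit at symmetric positions $z,-z$ and $w,-w$. The paper then finishes with the Aitchison--Reeves characterization: four consecutive vertices of a regular ideal $k$-gon have cross ratio $1+1/(2\cos(2\pi/k)+1)$, so equal cross ratios give $n=m$.

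Your proposed endgame---equal horocyclic segments plus a monotonicity of segment length in the side count---is exactly where the gap you flag lives, and the paper does not resolve it the way you suggest. The horocyclic segment cut from a regular ideal $n$-gon at one ideal vertex depends on the horoball size there, not on $n$ alone; making it a function of $n$ requires that the coherent horoballs at \emph{all} vertices of $G_n$ have the same absolute size, and then one must still justify comparability across $G_n$ and $G_m$. Your appeal to the rotations $\phi\circ\psi_i$ points in the right direction but leaves the normalization and the monotonicity claim unproved. The paper's cross-ratio route sidesteps this entirely, since cross ratio is a conformal invariant independent of any horoball choice, and the shared-edge lift supplies four consecutive vertices on each side at once. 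So your plan is sound up to the cusp parallelogram, but the clean finishing mechanism is the cross-ratio lemma rather than a segment-length monotonicity.
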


\begin{proof}
 By Theorem \ref{RegularityThm} the polygons in the checkerboard surfaces are regular. By assumption $G_n$ and $G_m$ are diagonal from each other with respect to a vertex in $\pi(L)$ so they are part of the same totally geodesic checkerboard surface. Suppose that this is the white checkerboard surface $\Sigma_W$. Lift both faces to $\mathbb{H}^3$. The result will be totally geodesic polygons which are adjacent up to translations by the lifts of $\Sigma_W$. Consider the pre-image such that the faces share an edge with vertices at $0$ and $\infty$, as shown in Figure \ref{fig:LiftedPolygons}.

\begin{figure}[ht]
\centering
\begin{overpic}[abs,unit=1mm,scale=.5, trim=0 8.25in 0 0, clip]
    {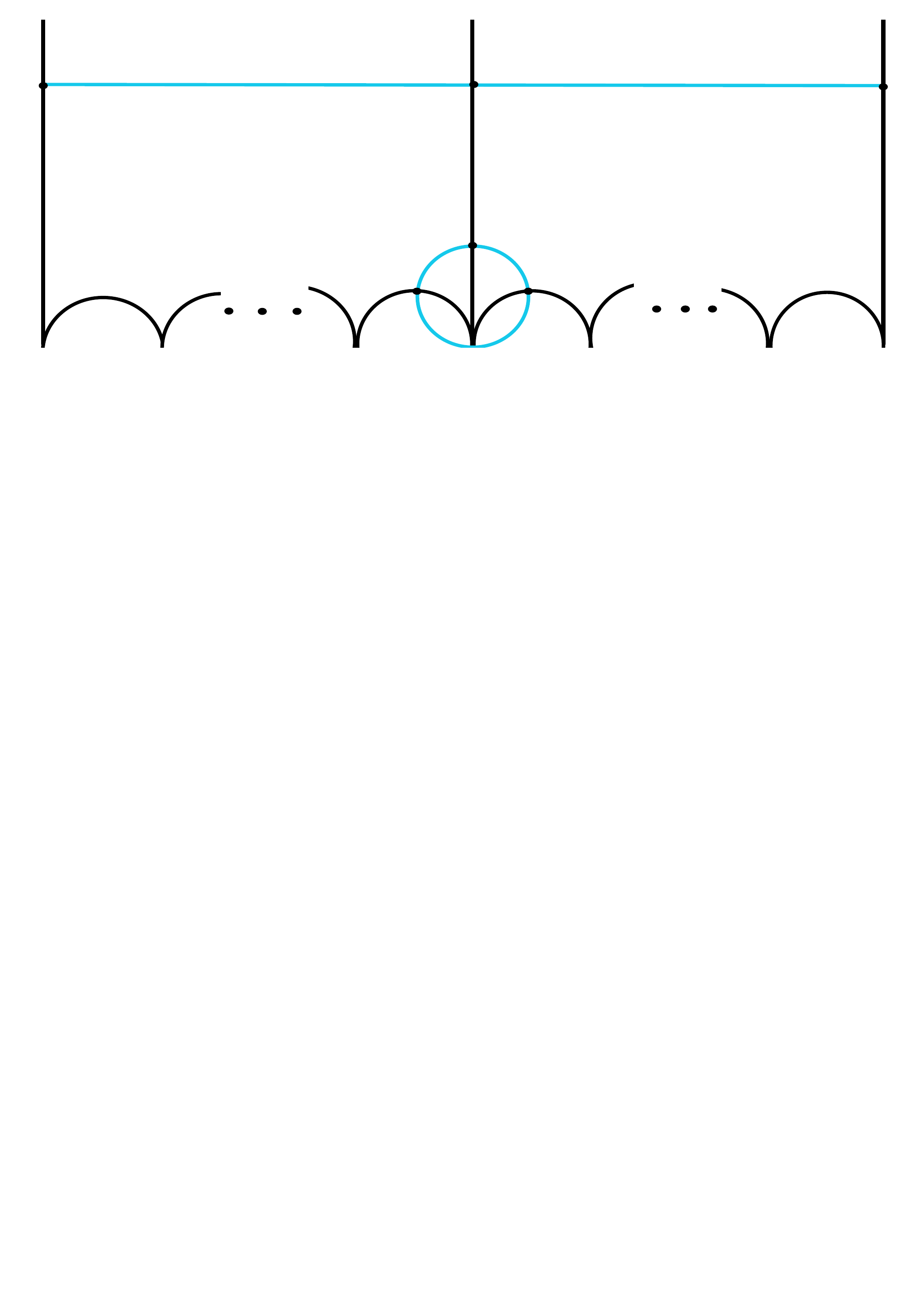}
    
    \put(7,36){A}
    \put(55, 36){Q}
    \put(95, 36){A'}
    \put(44, 12){B}
    \put(61, 12){B'}
    \put(50,18){P}
    \put(4,0){z}
    \put(53,0){0}
    \put(99,0){-z}
    \put(39,0){w}
    \put(65,0){-w}
    
    \end{overpic}
    
    \caption{Figure modified from \cite{Gan}. Lifts of $G_n$ and $G_m$ in $\widehat{\Sigma_W}$ are shown as a vertical plane in the upper half-space model. The intersection of $\widehat{\Sigma_W}$ with the boundaries of horoball lifts of a neighborhood of $L$ appear in blue and the polygons appear in black.}
    
    \label{fig:LiftedPolygons}
    
\end{figure}

Choose the cusp sizes such that all of their boundaries have a meridian of the same length in the Euclidean metric induced from the complete hyperbolic structure on the link complement. The link diagram's two totally geodesic checkerboard surfaces give the boundary of each cusp a tiling by quadrilaterals with interior angle the angle between the two totally geodesic checkerboard surfaces of $\pi(L)$. One such tiling is shown in Figure \ref{fig:quadrangulation}.

Using that this angle is fixed and that the totally geodesic surfaces are embedded, we see that the quadrilaterals have parallel sides. Consider one cusp. One of the diagonals of all of the parallelograms on this cusp's boundary corresponds to a meridian on the boundary of a tubular neighborhood of the link component corresponding to this cusp. Therefore one diagonal of each parallelogram is the same length. So, adjacent parallelograms have sides of the same (respective) lengths. This is shown in Figures \ref{fig:quadrangulation} and \ref{fig:harlequin}.

\begin{figure}
    \centering
    \includegraphics[width=3.75in, trim= {0 6.25in 0 0}]{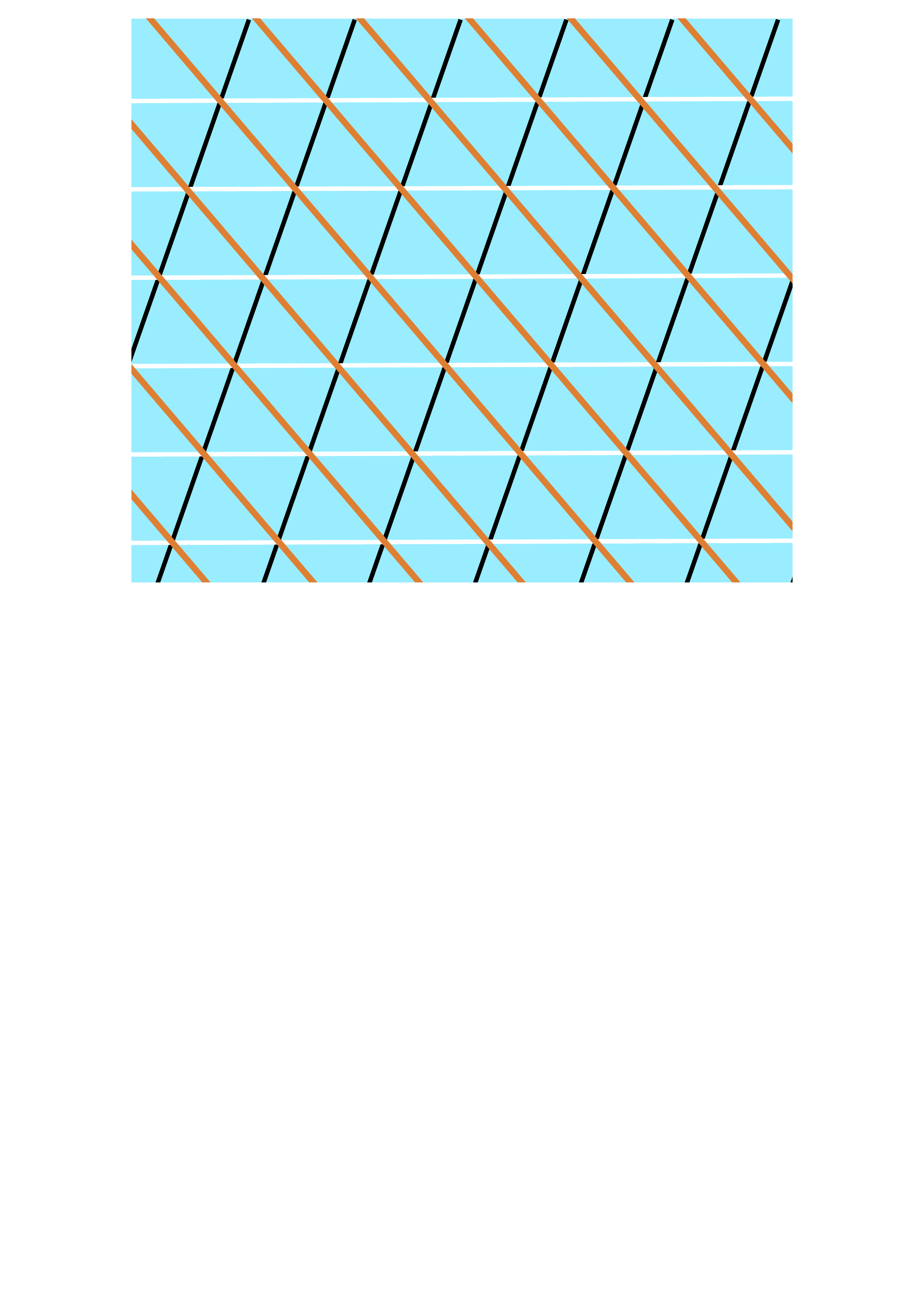}
    \caption{A tiling of the boundary of a cusp by its intersection with the white and shaded totally geodesic checkerboard surfaces. The cusp is shown in blue, the intersection with lifts of the white checkerboard surface are shown in white, and the intersection with lifts of the shaded checkerboard surface are shown in black. The pre-image of the meridian, the diagonal of each parallelogram that is the same fixed length, is shown in orange.}
    \label{fig:quadrangulation}
\end{figure}

\begin{figure}
    \centering
    \begin{overpic}[width= 4in, trim={0 7.5in 0 0in}]{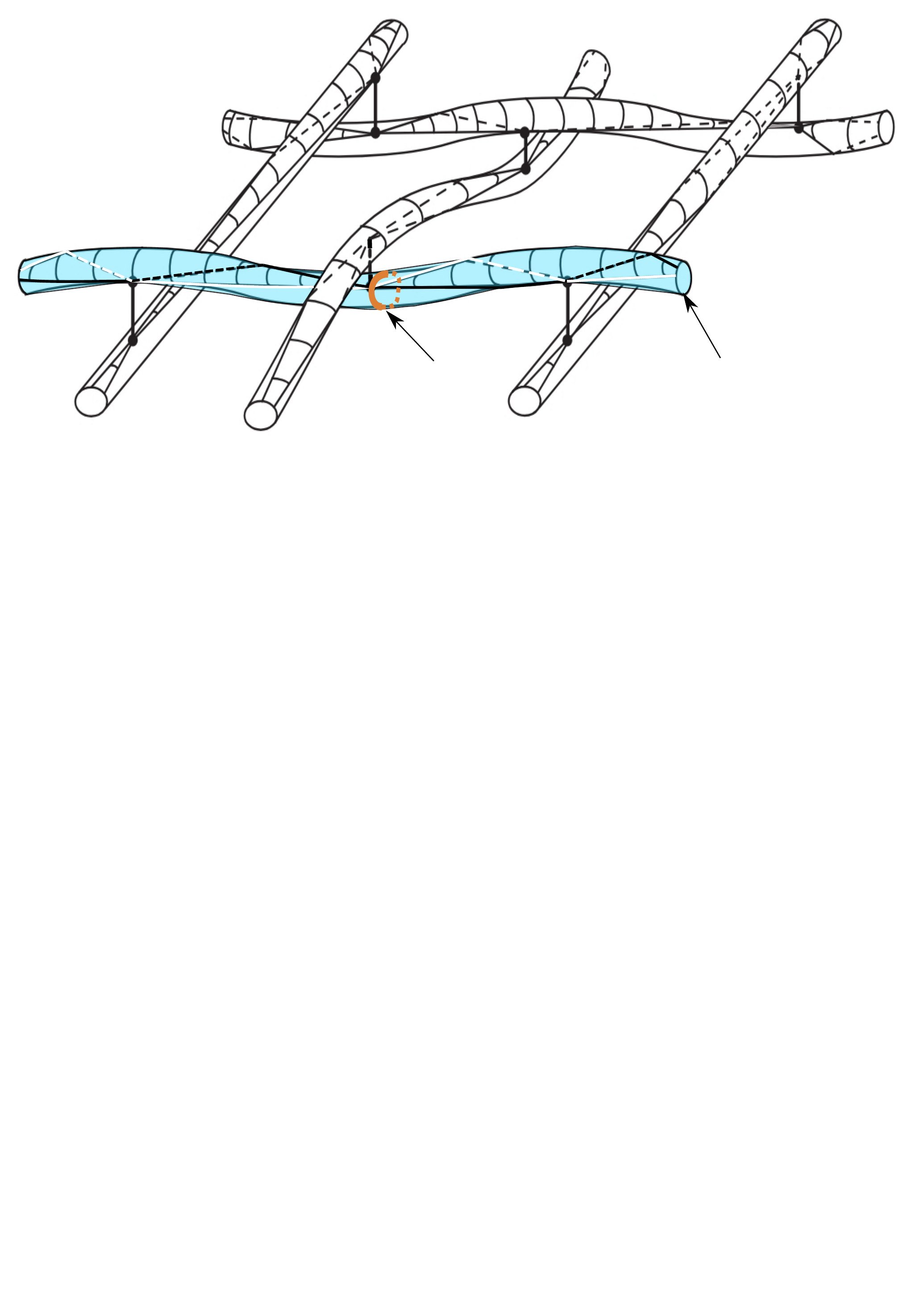}
    \put(41,8){meridian}
    \put(73,9){cusp torus}
    \end{overpic}
   \caption{Figure modified from \cite{harlequin} and \cite{Gan}. Tiling of the boundary of cusps of $(F \times I) \backslash L$ by quadrilaterals. A meridian is shown in orange.}
    \label{fig:harlequin}
\end{figure}

In Figure \ref{fig:LiftedPolygons} we see that the sides $AQ$ and $QA'$ are the same length, as are $BP$ and $PB'$. We have the following consecutive vertices of $\widehat{G_n}$: $z, \infty, 0, w$ where $z$ and $w$ are complex. Using that the segments are of equal length, we see that $-z, \infty, 0,$ and $-w$ are the corresponding symmetric consecutive vertices of $\widehat{G_m}$. So, $\widehat{G_n}$ and $\widehat{G_m}$ have consecutive vertices with the same cross ratio. Aitchison and Reeves showed that a hyperbolic polygon with $x$ sides is regular if and only if the cross ratio of any four consecutive vertices of the polygon is $1+ 1/ (2 \cos(2 \pi /x)+1)$ \cite[Lemma 3.2]{AR}. Our two sets of vertices have the same cross ratio so our two lifted polygons then have the same number of sides. 
\end{proof}

\section{Right-angled Structure and Totally Geodesic Checkerboard Surfaces}

In this section we consider links whose complements admit a complete hyperbolic structure formed by gluing generalized checkerboard polyhedra with dihedral angles of $\pi/2$.
\begin{defn}
\normalfont
 We say that tiling $T'$ of $\mathbb{H}^2$ is \textit{equivalent} to tiling $T$ if there exists a homeomorphism of $\mathbb{H}^2$ such that vertices, edges, and faces of $T'$ are sent to $T$.
\end{defn}

\begin{theorem}
\label{TFAE}
    Let $L$ be a cellular weakly generalized alternating link on $F$ in $F \times I$. Then the following are equivalent:
    
    \begin{enumerate}
        \item $L$ is RGCR.
        \item $L$ has two totally geodesic checkerboard surfaces.
        \item The checkerboard surfaces of $\pi(L)$ each have exactly one type of polygon.
        \item $L$ has a projection diagram $\pi(L)$ on $F$ with at most two types of polygons, one with $n$ sides and the other with $m$ sides, such that the polygons are arranged in an $[n, m, n, m]$ pattern about each vertex and are regular.
    \end{enumerate}
    
\end{theorem}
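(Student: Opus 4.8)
The plan is to prove the implications $(1)\Rightarrow(2)$, $(2)\Rightarrow(3)$, $(3)\Rightarrow(4)$, $(4)\Rightarrow(2)$, and $(2)\Rightarrow(1)$, which together give the equivalence. The implications out of $(2)$ essentially repackage the results of Section~3, $(1)\Leftrightarrow(2)$ is a short dihedral-angle computation, $(3)\Leftrightarrow(4)$ is combinatorial, and the one genuinely new ingredient is an arrow back into $(2)$ from the combinatorial side, namely $(4)\Rightarrow(2)$.

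For $(2)\Rightarrow(1)$: by Lemma~\ref{CRandIsoLemma}, $L$ is completely realizable and the generalized checkerboard polyhedra $P^+$, $P^-$ are isometric, indeed mirror images. Since $\Sigma_W$ and $\Sigma_S$ are embedded totally geodesic surfaces, their lifts are totally geodesic planes meeting at a fixed angle $\theta$, and cutting $(F\times I)\setminus L$ along $\Sigma_W\cup\Sigma_S$ recovers $P^+\sqcup P^-$; around each crossing arc the four sectors cut out by the two surfaces have angles $\theta,\pi-\theta,\theta,\pi-\theta$, with $P^+$ occupying the opposite pair of angle $\theta$ and $P^-$ the complementary pair of angle $\pi-\theta$. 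Thus every crossing-arc dihedral angle of $P^+$ equals $\theta$ while every one of $P^-$ equals $\pi-\theta$, and since $P^+\cong P^-$ we get $\theta=\pi-\theta$, i.e.\ $\theta=\pi/2$; so all dihedral angles are $\pi/2$ and $L$ is RGCR. For $(1)\Rightarrow(2)$, run this in reverse: if all dihedral angles of $P^\pm$ are $\pi/2$, then around each crossing arc the four sectors are right angles, so the two white faces adjacent to that arc are coplanar and $\Sigma_W$ is unbent there (and perpendicular to $\Sigma_S$); away from the crossing arcs $\Sigma_W$ is a union of totally geodesic faces, hence $\Sigma_W$, and likewise $\Sigma_S$, is totally geodesic.

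For $(2)\Rightarrow(3)$: this is Proposition~\ref{Gan_DiagsEqu}, using that a cellular weakly generalized alternating link is nonsplit, so each checkerboard surface is connected and all its faces are joined by chains of diagonal pairs, forcing all white faces to have the same number of sides $n$ and all shaded faces the same number $m$. For $(3)\Rightarrow(4)$: since the diagram is cellular and $4$-valent with faces alternating in color, having all white faces $n$-gons and all shaded faces $m$-gons forces the cyclic face pattern at every vertex to be $[n,m,n,m]$, and the Euler relations $E=2V$, $nF_W=mF_S=E$ give $1/n+1/m\le\tfrac12$ with equality precisely for torus projections. This vertex configuration $(n.m)^2$ (resp.\ the regular configuration $\{n,4\}$ when $n=m$) determines the tiling of the universal cover up to combinatorial equivalence, and the corresponding standard tiling of $\mathbb{S}^2$, $\mathbb{E}^2$, or $\mathbb{H}^2$ realizes it by regular polygons, which is $(4)$; the reverse $(4)\Rightarrow(3)$ is immediate, and combined with Theorem~\ref{RegularityThm} the same combinatorics also give $(2)\Rightarrow(4)$ directly.

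The remaining and main step is $(4)\Rightarrow(2)$ (equivalently $(3)\Rightarrow(2)$). Here $\pi(L)$ is combinatorially the (quasi)regular tiling, so $(F\times I)\setminus L$ is one of the complements handled by Adams--Calderon--Mayer: its complete hyperbolic structure (with totally geodesic boundary in the higher-genus case) is assembled from symmetric hyperbolic bipyramids, one face-centered bipyramid per diagram face. The equatorial polygon of each symmetric bipyramid is the fixed locus of the reflective involution swapping its two apexes, hence totally geodesic; I would then show that, because all $n$-bipyramids are isometric, all $m$-bipyramids are isometric, and the triangular gluing faces respect the up/down apex structure dictated by the regular $[n,m,n,m]$ pattern, these per-bipyramid involutions patch into global isometric reflections of $(F\times I)\setminus L$ whose fixed loci are exactly $\Sigma_W$ and $\Sigma_S$. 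This gives $(2)$, and then $(1)$ by the argument above. I expect the hard part to be precisely this patching --- checking that the reflections across the equatorial planes of adjacent bipyramids agree on their shared triangular face, so that the equatorial $n$-gons line up into honest totally geodesic planes rather than merely meeting at ideal points --- which I would handle by lifting everything to the universal cover, using the reflection symmetries of the regular tiling of $\mathbb{H}^2$ or $\mathbb{E}^2$, and appealing to Mostow--Prasad rigidity together with \cite[Theorem~8.15]{Schlenker} (as in Lemma~\ref{CRandIsoLemma}) to identify the symmetric structure so produced with the unique complete one. A minor secondary point is the combinatorial rigidity invoked in $(3)\Rightarrow(4)$, which should follow by propagating the forced local picture outward from a single vertex.
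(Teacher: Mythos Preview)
Your cycle of implications is logically sound, and the arrows $(1)\Rightarrow(2)$, $(2)\Rightarrow(3)$, and $(3)\Rightarrow(4)$ agree with the paper's (for $(3)\Rightarrow(4)$ the paper invokes concrete results of Datta--Gupta and Datta--Maity to produce the semi-regular realization, rather than an informal propagation argument; you should cite something here). Your direct $(2)\Rightarrow(1)$, using that $P^+$ and $P^-$ are mirror images while the sector angles at each crossing arc are $\theta$ on one side and $\pi-\theta$ on the other, is a pleasant shortcut that the paper does not record.

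The real divergence is in how the loop is closed. You route through $(4)\Rightarrow(2)$ by trying to patch the equatorial reflections of the Adams--Calderon--Mayer bipyramids into a global isometric involution of $(F\times I)\setminus L$, leaving the compatibility across shared triangular faces as an acknowledged gap to be filled with Mostow--Prasad and Schlenker's rigidity. The paper avoids this entirely: it proves $(4)\Rightarrow(1)$ by a two-line dihedral-angle computation straight from the ACM wedge formulas. Each bipyramid over an $n$-gon with interior angle $\alpha_n$ has $D$-edge angle $\pi-\alpha_n$; slicing into two pyramids halves this to $(\pi-\alpha_n)/2$; and in $P^\pm$ each crossing-arc edge is where a $D$-edge from an $n$-gon pyramid meets one from an $m$-gon pyramid, giving total dihedral angle $(\pi-\alpha_n)/2+(\pi-\alpha_m)/2$. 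The $[n,m,n,m]$ condition forces $\alpha_n+\alpha_m=\pi$, so this equals $\pi/2$, and $L$ is RGCR. Your reflection-patching programme is therefore not needed: the explicit bipyramid angles already do the work, and you then get $(2)$ from $(1)$ by your own $(1)\Rightarrow(2)$.
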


\begin{proof}
$(1)\implies(2)$
By assumption the complete hyperbolic structure on the link complement is given by gluing $P^+$ and $P^-$ by gear shift rotation. Consider the totally geodesic faces of these hyperbolic generalized polyhedra. If we glue just the shaded faces of $P^+$ and $P^-$ then we see that the white faces meet in pairs at a $\pi$ angle which is the sum of the two $\pi/2$ dihedral angles between white and shaded faces on each hyperbolic generalized polyhedron. This also holds if we glue only the white faces. Therefore the checkerboard surfaces of $\pi(L)$ are both totally geodesic and the hyperbolic structure on each checkerboard surface is determined by the hyperbolic structure of the generalized polyhedra. 

$(2) \implies (3)$
Follows from Proposition \ref{Gan_DiagsEqu}.

$(3) \implies (4)$
The checkerboard surfaces of $\pi(L)$ each contain one type of polygon, therefore $\pi(L)$ must have faces consisting of polygons of $n$ numbers of sides and $m$ numbers of sides in a pattern $[n,m,n,m]$ about each vertex. If the faces are regular then we are done. 

Suppose that the polygons are not regular. If $F$ has negative Euler characteristic then the link diagram on $F$ in $F \times I$ corresponds to a quotient of a tiling of $\mathbb{H}^2$ by $n$-gons and $m$-gons arranged in the specified pattern. Call this tiling $T$. This $[n, m, n, m]$ pattern about each vertex of the tiling satisfies condition $(1)$ in \cite{DattaandGupta}. Therefore there exists a semi-regular tiling $T'$ of $\mathbb{H}^2$ by regular polygons with vertex type $[n,m,n,m]$ at every vertex, which is equivalent to $T$ \cite[Lemma 2.5]{DattaandGupta}.
Using this regular tiling of $\mathbb{H}^2$ we find the diagram of $L$ with regular polygons on $F$. If $F=T^2$ we apply the same argument using \cite[Theorem 1.3]{DattaandMaity} to find $\pi(L)$ as the quotient of a regular tiling of $\mathbb{E}^2$. 

$(4) \implies (1)$
 By assumption, we can construct $L$ as a quotient of a regular tiling of $\mathbb{E}^2$ or $\mathbb{H}^2$ with one vertex type, so it is a uniform tiling link (in the terminology of \cite{ACM}). Adams, Calderon, and Mayer decomposed the complements of these links into a collection of symmetric bipyramids \cite[Theorem 4.4]{ACM}. They further found the angles of these bipyramids in terms of the angles of the polygons in $F \backslash \pi(L)$ by cutting each bipyramid into a collection of wedges with angles $D= \pi - \alpha$, $B=C=E=F= \alpha/2$, and $A=2\pi/n$ for each $n$-gon in the projection diagram with interior angle $\alpha$ (see Figure \ref{fig:wedges} for labels).
    \begin{figure}
        \centering
        \begin{overpic}[abs,unit=1mm,scale=.5, trim={0in 6.75in 0in 0in}, clip]{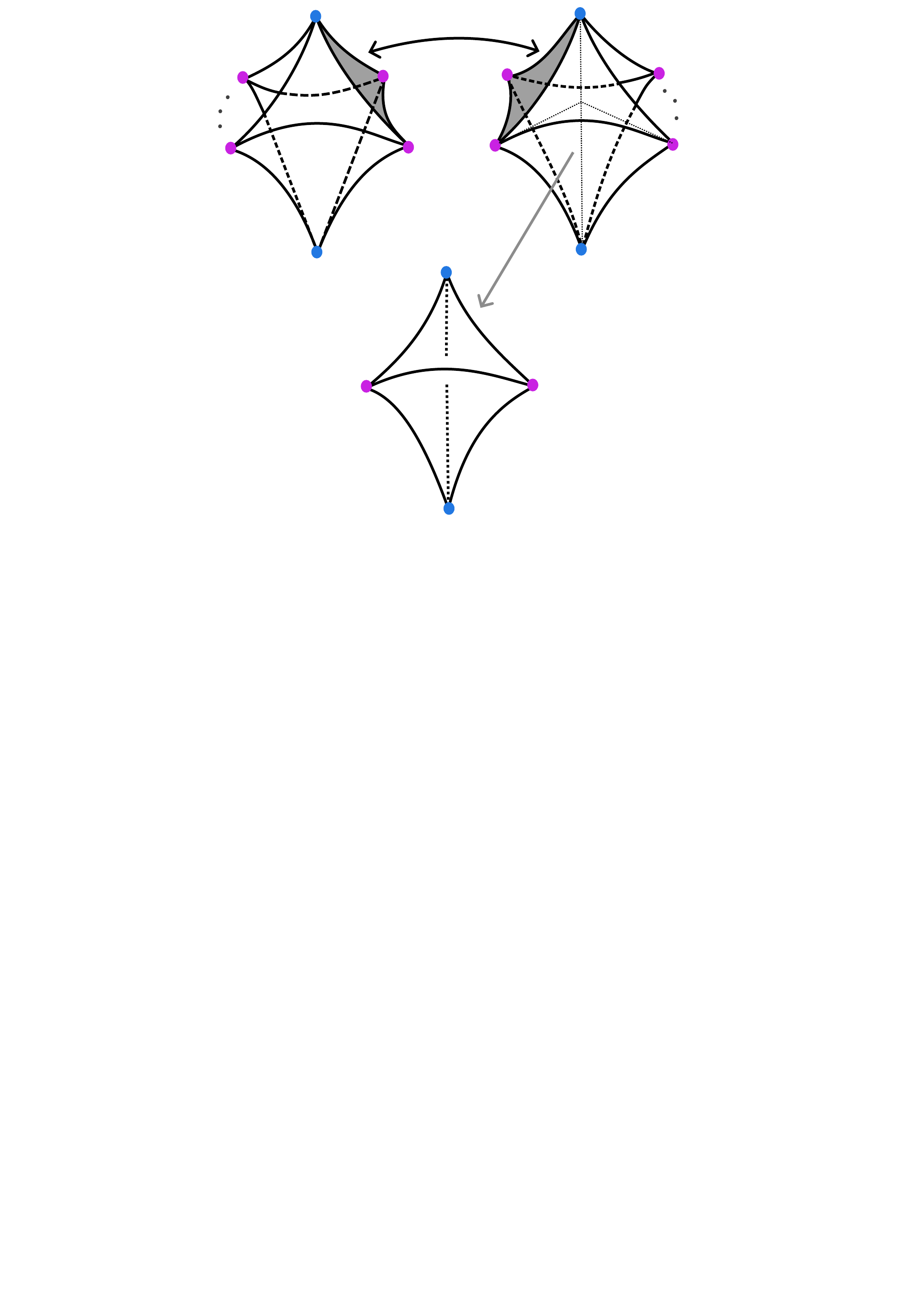}
        \small
        \put(45,9){F}
        \put(55,9){E}
        \put(52,16){D}
        \put(51,24){A}
        \put(44,24){B}
        \put(56,24){C}
        
        \end{overpic}
        \caption{Above: two symmetric bipyramids with the gluing along their faces indicated by an arrow. Ultra-ideal vertices are blue and ideal vertices are pink. Below: a `wedge' with edges labelled.}
        \label{fig:wedges}
    \end{figure}
    We have assumed that there are two $n$-gons and two $m$-gons about each vertex. This implies that for $\alpha_n$ the interior angle of the $n$-gon and $\alpha_m$ the interior angle of the $m$-gon, $2\alpha_n+2\alpha_m=2 \pi$ so $\alpha_n+\alpha_m= \pi$. Slice the bipyramids from \cite{ACM} along the checkerboard surfaces which separate each bipyramid into two pyramids. We can then see that $P^+$ and $P^-$ will be the result of gluing the resulting pyramids on their faces. This gluing matches the corresponding $D$ edges with dihedral angles $(\pi - \alpha_n)/2$ and $(\pi- \alpha_m)/2$. Using that $(\pi - \alpha_n + \pi - \alpha_m)/2= \pi /2$, we see that this gluing forms right-angled generalized checkerboard polyhedra. Hence $L$ is right-angled generalized completely realizable.

\end{proof}

\begin{corollary}
\label{TorusCor}
For projection surface $F = T^2$, $L$ is RGCR if and only if it is the triaxial link or the square weave.
\end{corollary}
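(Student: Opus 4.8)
The plan is to push everything through the equivalence established in Theorem~\ref{TFAE} and then solve a short Euclidean angle equation. By that theorem, $L$ is RGCR if and only if $\pi(L)$ is built from regular polygons with at most two side-counts $n$ and $m$, occurring in the pattern $[n,m,n,m]$ about every vertex. Since $F=T^2$ has zero Euler characteristic, lifting this diagram to the universal cover produces a tiling of $\mathbb{E}^2$ by regular polygons in which every vertex has type $[n,m,n,m]$; because the tiling is connected and edge-to-edge, all of its $n$-gons are mutually congruent regular Euclidean $n$-gons, and likewise for its $m$-gons. So the problem reduces to classifying such Euclidean tilings.

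The angle bookkeeping is then immediate: the interior angle of a regular Euclidean $k$-gon is $\pi(k-2)/k$, so two $n$-gons and two $m$-gons closing up around a vertex forces
\[
2\,\frac{\pi(n-2)}{n}+2\,\frac{\pi(m-2)}{m}=2\pi,\qquad\text{equivalently}\qquad \frac1n+\frac1m=\frac12 .
\]
The only solutions in integers $n,m\ge 3$ are, up to order, $(n,m)=(4,4)$ and $(n,m)=(3,6)$. The first is the square tiling of $\mathbb{E}^2$ and the second is the trihexagonal tiling with every vertex of type $[3,6,3,6]$; each such tiling is unique up to equivalence, and since both are periodic they descend to tori. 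The alternating links obtained from these two tilings are, by the description in \cite{CKPBiperiodic}, exactly the square weave $W$ and the triaxial link. This shows that an RGCR link on $T^2$ must be one of these two.

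For the converse, the standard torus diagrams of $W$ and of the triaxial link plainly satisfy condition~(4) of Theorem~\ref{TFAE} --- in the first case every face is a regular $4$-gon, and in the second the faces are regular triangles and hexagons meeting $[3,6,3,6]$ at each vertex --- so both links are RGCR. The content beyond Theorem~\ref{TFAE} is thus just the one-line Diophantine analysis, and I do not expect a genuine obstacle; the only step needing a little care is matching each of the two surviving Euclidean tilings with its named link, for which I would quote the constructions of the square weave and the triaxial link in \cite{CKPBiperiodic}.
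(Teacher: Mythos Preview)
Your argument is correct. The paper's own proof is a two--line citation: it invokes Theorems~\ref{TFAE} and~\ref{RegularityThm} to conclude that an RGCR link on $T^2$ is semi-regular in the sense of \cite{CKPBiperiodic}, and then quotes \cite[Theorem~5.1]{CKPBiperiodic} wholesale to finish. You instead use only condition~(4) of Theorem~\ref{TFAE} and carry out the Euclidean angle count $\tfrac{1}{n}+\tfrac{1}{m}=\tfrac{1}{2}$ by hand, arriving at $(n,m)\in\{(4,4),(3,6)\}$ directly. This is effectively unpacking the relevant piece of \cite[Theorem~5.1]{CKPBiperiodic} rather than citing it, so your route is more self-contained and elementary, at the cost of a few extra lines; the paper's version is terser but leans on the external reference as a black box. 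Both arrive at the same endpoint, and the only place you still need \cite{CKPBiperiodic} is exactly where you flag it---identifying the two surviving tilings with the named links.
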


\begin{proof}
Theorems \ref{TFAE} and \ref{RegularityThm} imply that any RGCR link with alternating projection on the torus is semi-regular. We can then apply \cite[Theorem 5.1]{CKPBiperiodic}.  
\end{proof}

Theorem \ref{TFAE} has implications for commensurability. 

\begin{prop}
Consider $RGCR$ links where $F$ has genus greater than 1. All $RGCR$ links with the same $n$ and $m$-gon faces in their checkerboard surfaces are commensurable.
\end{prop}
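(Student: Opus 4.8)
The plan is to show that any two RGCR links with the same $n$-gon and $m$-gon checkerboard faces have a common finite-sheeted cover, by exhibiting a single universal object that both complements cover. By Theorem \ref{TFAE}, part (4), such a link $L$ is built as a quotient of the unique semi-regular tiling $T_{n,m}$ of $\mathbb{H}^2$ with vertex type $[n,m,n,m]$; different RGCR links with these parameters correspond to different torsion-free finite-index subgroups of the symmetry group of the \emph{same} tiling $T_{n,m}$ (equivalently, to different compact quotient surfaces $F$ over which $T_{n,m}$ descends). By Theorem \ref{TFAE} $(1)$ together with Lemma \ref{CRandIsoLemma} and Theorem \ref{RegularityThm}, the complement $(F\times I)\setminus L$ is obtained by gear-shift gluing two isometric right-angled generalized checkerboard polyhedra whose faces are the regular ideal $n$-gons and $m$-gons of $T_{n,m}$, with dihedral angles determined purely by $n$ and $m$ (via the wedge angles $D = (\pi-\alpha)/2$, etc., computed in the proof of Theorem \ref{TFAE}). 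So the \emph{local} hyperbolic geometry near every point of $(F\times I)\setminus L$ — the shape of the polyhedral pieces and their gluing — depends only on $n$ and $m$, not on $F$.

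First I would make this precise by passing to the ``biperiodic'' model: let $S$ be the (noncompact) complete hyperbolic surface $\mathbb{H}^2$ tiled by $T_{n,m}$, and let $\widetilde{L}\subset S\times I$ be the corresponding infinite alternating link, so that $(S\times I)\setminus\widetilde{L}$ is the common infinite cyclic-type cover of all the $(F\times I)\setminus L$. Concretely, for each RGCR link $L$ on $F$, the deck group of the cover $S\to F$ acts on $T_{n,m}$ by tiling symmetries, lifts to an action on $(F\times I)\setminus L$ by isometries (since the hyperbolic metric is the canonical one and the polyhedral decomposition is equivariant), and the quotient of $(S\times I)\setminus\widetilde{L}$ by this action is exactly $(F\times I)\setminus L$. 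Thus every such $(F\times I)\setminus L$ is a finite-volume quotient of the one fixed infinite-volume manifold $M_{n,m} := (S\times I)\setminus\widetilde{L}$ with its canonical hyperbolic metric.

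Next I would extract commensurability. Let $\Gamma_0 < \mathrm{Isom}(\mathbb{H}^3)$ be the full isometry group of the universal cover $\widetilde{M_{n,m}}$ preserving the lifted polyhedral decomposition — equivalently, the reflection-type group generated by the symmetries of one right-angled checkerboard polyhedron together with the gear-shift gluings; this $\Gamma_0$ depends only on $n$ and $m$. For each RGCR link $L$ on $F$, $\pi_1((F\times I)\setminus L)$ embeds as a finite-index torsion-free subgroup of $\Gamma_0$ (its image is the stabilizer of the polyhedral tiling of the universal cover, intersected with the relevant translations; finite index because $(F\times I)\setminus L$ has finite volume and $\widetilde{M_{n,m}}$ tiles $\mathbb{H}^3$ with bounded-size fundamental domain). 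Two subgroups of a common group that are both finite-index are themselves commensurable, so $\pi_1$ of any two such link complements are commensurable as subgroups of $\Gamma_0$, hence the links are commensurable. A clean alternative phrasing: double $M_{n,m}$ along both totally geodesic checkerboard surfaces — or rather double each $(F\times I)\setminus L$ along its two totally geodesic checkerboard surfaces to get closed/cusped finite-volume manifolds $D_wD_s((F\times I)\setminus L)$ as in the remark after Lemma \ref{CRandIsoLemma}; these doubles are all commensurable because they are all tiled by congruent copies of the fixed ``quarter-block'' built from one right-angled polyhedron of $T_{n,m}$, and commensurability of the doubles pulls back to commensurability of the originals.

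The main obstacle is making the ``they're all tiled by the same pieces'' intuition into a genuine statement about a common finite cover: one must check that the isometric gluing maps (gear-shift rotations and the face-identifications of Proposition \ref{Decomp}) genuinely agree across different $F$, i.e. that the canonical hyperbolic structure really is locally identical and not just combinatorially so. This is where Theorem \ref{RegularityThm} (the faces are \emph{regular} ideal polygons, hence rigid) and the explicit wedge-angle computation in the proof of Theorem \ref{TFAE} $(4)\Rightarrow(1)$ do the essential work — they pin down every polyhedron and every dihedral angle as a function of $n$ and $m$ alone. A secondary technical point is verifying that $\Gamma_0$ (or the group generated by the doubled fundamental domains) is discrete and that the link groups sit inside it with finite index; discreteness follows since the pieces are right-angled and tile $\mathbb{H}^3$ without overlap (Andreev/Poincaré polyhedron theorem), and finite index follows from the finite volume of each $(F\times I)\setminus L$ against the positive-but-finite volume of a single fundamental piece. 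I would also remark that the same argument, with $\mathbb{H}^2$ replaced by $\mathbb{E}^2$ and $\mathbb{H}^3$ by $\mathbb{H}^3$ with cusped (rather than geodesic-boundary) ends, recovers the genus-one commensurability of the square weave and triaxial-link families, though the proposition as stated restricts to genus $>1$.
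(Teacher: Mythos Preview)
Your proposal is correct and follows essentially the same strategy as the paper: exhibit a single discrete group $\Gamma_0 < \mathrm{Isom}(\mathbb{H}^3)$, depending only on $(n,m)$, in which the fundamental group of every such link complement sits with finite index. The paper's execution is more concrete than yours---rather than invoking an abstract $\Gamma_0$ or the infinite biperiodic cover, it names the common orbifold directly as the quotient of $\mathbb{H}^3$ by reflections in the faces of a single pair of bipyramid wedges (from the Adams--Calderon--Mayer decomposition), whose footprint in $\mathbb{H}^2$ is the $(\pi/n,\pi/m,\pi/2)$ triangle generating the symmetry group of $T_{n,m}$---so finite index is read off from finite volume without a separate Poincar\'e-polyhedron or discreteness argument.
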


\begin{proof}
 By Theorem \ref{TFAE} any two RGCR links, $L_1$ and $L_2$, with the same $n$ and $m$-sided polygons correspond to regular $[n, m, n, m]$ tilings of $\mathbb{H}^2$. The group of isometries of this tiling is the collection of hyperbolic isometries generated by reflecting in the three sides of a triangle with angles $\pi/n$, $\pi/m$, and $\pi/2$. We can see this by noting that these isometries preserve the $[n, m, n, m]$ tiling and that any isometry of the tiling is the result of a combination of these reflections, as shown for tilings consisting of one type of polygon in \cite{EEK}. 
 
 Doubling this triangle along the edge between the $\pi/n$ and $\pi/m$ angles gives us a quadrilateral in $\pi(L)$ which corresponds to two bipyramid wedges sharing edge $D$ in the hyperbolic structure of each link's complement (see the labelling in Figure \ref{fig:wedges}).
 Reflections in the faces of this pair of wedges are then isometries of the three-dimensional tiling of $\mathbb{H}^3$ by the bipyramids corresponding to the $[n, m, n, m]$ tiling. The complement of $L_1$ in $F_1 \times I$, $M_1$, and the complement of $L_2$ in $F_2 \times I$, $M_2$, both finitely cover the orbifold which is the quotient of the three-dimensional tiling by these these isometries. Take the intersection of $\pi_1(M_1)$ and $\pi_1(M_2)$. The result is a group $G$ corresponding to a finite cover of $M_1$ and $M_2$.
 
\end{proof}

\begin{remark}
  \normalfont For semi-regular links on the torus, Champanerkar, Kofman, and Purcell related the commensurability of the links to the polygons in their corresponding tilings \cite[Theorem 4.1]{CKPBiperiodic}. 
\end{remark}

\section{Classifying Right-angled Generalized Completely Realizable Links}

It is rare for alternating links in $S^3$ to be right-angled and completely realizable. In particular, Gan showed that there are only three such links \cite{Gan}. Moving to the thickened torus, we can see from Corollary \ref{TorusCor} that there are only two tilings that correspond to RGCR links on the torus. In this section find an upper bound on the number of RGCR links in each genus $g\geq 2$ thickened surface and provide a table of the types of tilings that correspond to RGCR links in $F \times I$ with genus 2 to 4. 
 
 We next consider right-angled knots. Champanerkar, Kofman, and Purcell conjecture that there are no links in $S^3$ with right-angled structure \cite{CKPright}. Supporting this conjecture, Gan's work proves that there are no right-angled and completely realizable knots in $S^3$. We show that this conjecture does not extend to links in thickened surfaces by constructing an example of an RGCR knot.

\subsection{Counting RGCR Links}
    In order to find an upper bound on the number of RGCR links we need to both find the number of tilings corresponding to RGCR links for each genus $g$ projection surface and see how many RGCR links correspond to a given number of tiles in a given tiling. As shown in Figure \ref{fig:2linkssametile} distinct links can correspond to the same tiling and number of tiles. 

    \begin{example}
    \normalfont Figure \ref{fig:2linkssametile} provides an example of two distinct links on a genus 3 surface which derive from the same tiling of $\mathbb{H}^2$ by octagons and contain the same number of octagons in their projection diagrams. One can check that the links are distinct by noting that they have different numbers of components. 
    
    \begin{figure}
        \centering
        \includegraphics[trim= 0.25in 0.5in 0.25in 1.5in, width=4in]{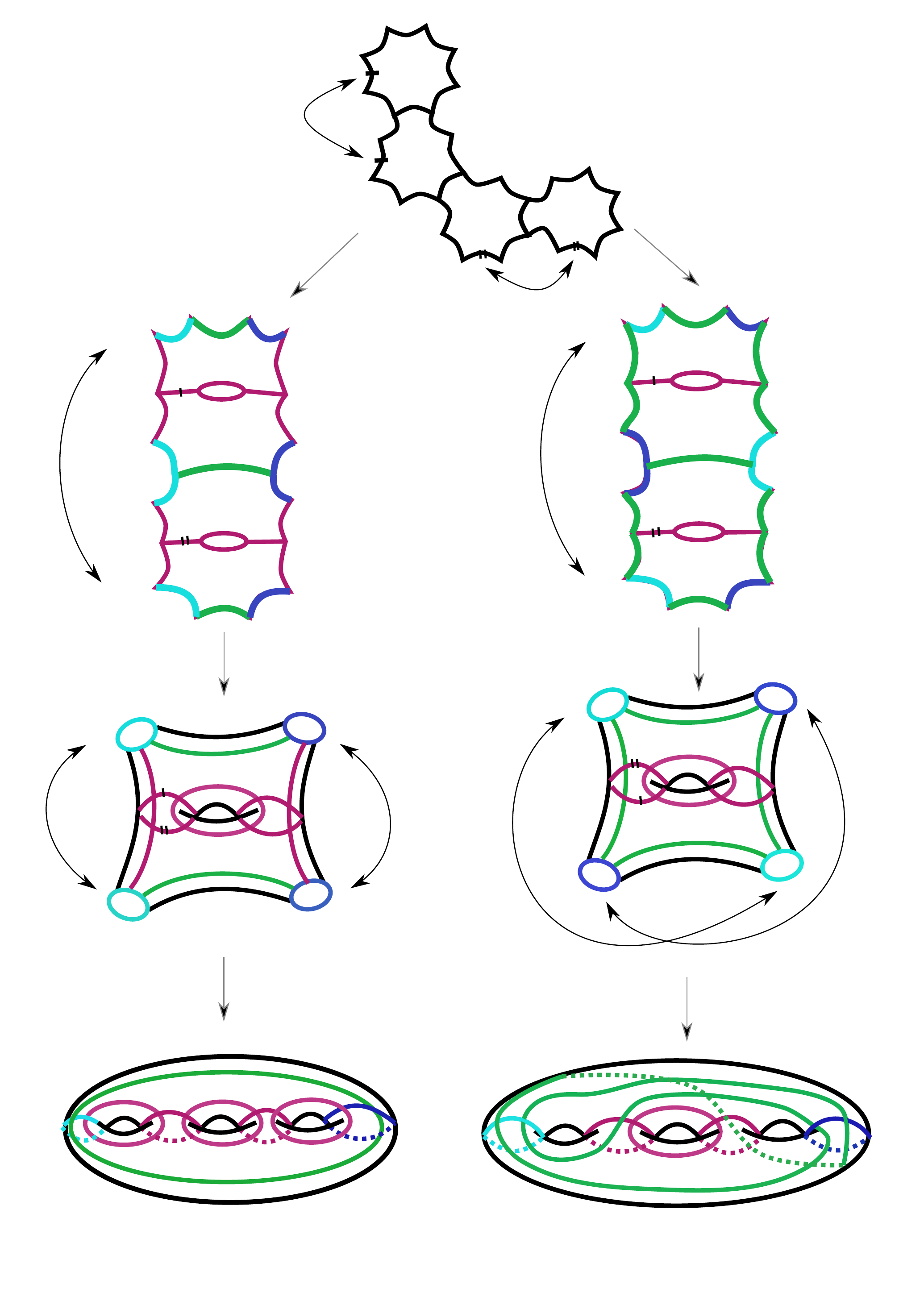}
        \caption{Two links from a tiling of $\mathbb{H}^2$ by right-angled octagons. Both links have $m=8=n$ and $k_n=2=k_m$. We begin with a fundamental domain of four right-angled octagons and proceed to identify edges to form the projection diagrams of the links. Double headed arrows indicate gluing while single headed arrows indicate the results of the gluings. The different choices of gluing result in distinct links.}
        \label{fig:2linkssametile}
    \end{figure}
    \end{example}
    
    \begin{defn}
    \normalfont Denote the number of times that $n$-gons occur in a checkerboard surface of a link diagram by $k_n$ and the number of times that $m$-gons occur in the other checkerboard surface by $k_m$.
    \end{defn} 
    
    Let $\alpha_n$ be the interior angle of each $n$-gon and $\alpha_m$ be the interior angle of each $m$-gon. By Theorem \ref{TFAE}, an RGCR link must have $\alpha_n + \alpha_m= \pi$. Additionally, each $n$-gon shares each of its edges with an $m$-gon and each $m$-gon corresponds to $m$ $n$-gons. So, $k_n n=k_mm$. Note that when $n=m$ we have $k_n=k_m$.
    
    \begin{prop}[Gauss-Bonnet]
    
    Let $F$ be the projection surface of a link $L$ with polygons $T_n$ and $T_m$ in its diagram. Then
    $-2\pi \chi (F) = a(F) = k_n a(T_n) + k_m a(T_m),$ where $a$ denotes area.
    \end{prop}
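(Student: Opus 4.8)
The plan is to apply the Gauss--Bonnet theorem to the closed hyperbolic surface $F$ and then invoke additivity of area over the tiling of $F$ by the checkerboard polygons. Since $L$ is RGCR, Theorem~\ref{TFAE} tells us that $\pi(L)$ is a regular $[n,m,n,m]$ tiling, so $L$ is a uniform tiling link and $F$ carries the hyperbolic metric (the same one appearing on the totally geodesic boundary $F\times\{1\}$) in which every face of $F\setminus\pi(L)$ is realized as a geodesic regular polygon: $k_n$ of them isometric to the regular $n$-gon $T_n$ with interior angle $\alpha_n$, and $k_m$ of them isometric to the regular $m$-gon $T_m$ with interior angle $\alpha_m = \pi - \alpha_n$, using that a regular hyperbolic polygon is determined up to isometry by its number of sides and its interior angle.

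First I would record Gauss--Bonnet for $F$: with constant curvature $-1$ we have $\int_F K\, dA = -a(F)$, while $\int_F K\, dA = 2\pi\chi(F)$, so $a(F) = -2\pi\chi(F)$, which is positive because $\chi(F) = 2-2g < 0$ for $g \geq 2$. Next, since $\pi(L)$ is cellular the faces of $F\setminus\pi(L)$ decompose $F$ into closed polygons with pairwise disjoint interiors, so additivity of area gives $a(F) = \sum_{\text{faces}} a(\text{face}) = k_n\, a(T_n) + k_m\, a(T_m)$, where the last step combines the count of faces of each type with the congruence of all $n$-gon (resp.\ $m$-gon) faces noted above. Chaining the two displays yields $-2\pi\chi(F) = a(F) = k_n\,a(T_n) + k_m\,a(T_m)$.

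There is no substantive obstacle here; the only points requiring care are that the hyperbolic metric used to measure $a(T_n)$ and $a(T_m)$ is the one in which these faces sit inside $F$ as geodesic polygons --- precisely the structure supplied by the uniform tiling link / ACM decomposition and the identification of the boundary surface with the projection surface --- and that all faces with a given number of sides are mutually isometric, which follows from regularity together with $\alpha_n + \alpha_m = \pi$. (One could alternatively substitute $a(T_n) = (n-2)\pi - n\alpha_n$ and $a(T_m) = (m-2)\pi - m\alpha_m$ and verify the identity directly from the Euler characteristic of the cell structure $\pi(L)$, but the Gauss--Bonnet argument above is cleaner.)
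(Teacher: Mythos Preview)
Your argument is correct; the paper does not actually supply a proof of this proposition, simply labeling it ``Gauss--Bonnet'' to indicate that it is the classical theorem together with additivity of area over the cellular decomposition. What you wrote is precisely the standard justification the author is implicitly invoking, including the observation that regularity and the fixed interior angles $\alpha_n,\alpha_m$ make all $n$-gons (resp.\ $m$-gons) mutually isometric so that $k_n\,a(T_n)+k_m\,a(T_m)$ is well defined.
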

  
    \begin{theorem}
    \label{finitelymany}
    Given a projection surface $F$ of genus $g$, there are finitely many tilings of $\mathbb{H}^2$ and $\mathbb{E}^2$ which correspond to RGCR links, $L$, with $\pi(L)$ on $F$. Moreover, for each $F$ of genus $g > 1$, the number of RGCR links is bounded above by $\displaystyle{\bigg(\frac{310g^2}{9}-\frac{101g}{3}+4\bigg)[(84g-83)!]}$.
    \end{theorem}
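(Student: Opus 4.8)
The plan is to convert the geometric hypothesis into a finite combinatorial enumeration using Theorem~\ref{TFAE}. By that theorem, an RGCR link $L$ whose projection lies on a genus-$g$ surface has a diagram $\pi(L)$ that is the quotient of a regular tiling of $\mathbb{H}^2$ (or of $\mathbb{E}^2$ when $g=1$) by a single vertex type $[n,m,n,m]$; since four polygons meet at each vertex, their interior angles satisfy $\alpha_n+\alpha_m=\pi$, so the regular $n$-gon and $m$-gon are rigid and the tiling of the model plane is determined up to isometry by $(n,m)$. Thus the first assertion will follow once $n$ and $m$ are bounded in terms of $g$, and the quantitative bound will follow from counting the combinatorial ways such a tiling is glued up into a closed genus-$g$ surface (and then promoted to an alternating link).

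For the bookkeeping, let $k_n,k_m$ count the $n$- and $m$-gons of $\pi(L)$. Counting edge–face incidences gives $E=nk_n=mk_m$ for the number of crossing arcs, every crossing is $4$-valent so $V=E/2$, and Euler's formula $V-E+(k_n+k_m)=2-2g$ forces
\[
k_n=\frac{4m(g-1)}{(n-2)(m-2)-4},\qquad k_m=\frac{4n(g-1)}{(n-2)(m-2)-4},\qquad E=\frac{4nm(g-1)}{(n-2)(m-2)-4}.
\]
For $g=1$ the plane is Euclidean, so the polygons are Euclidean-regular and $\alpha_n+\alpha_m=\pi$ forces $(n-2)(m-2)=4$, giving only $[3,6,3,6]$ and $[4,4,4,4]$. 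For $g>1$ positivity of $k_n,k_m$ gives $(n-2)(m-2)>4$, so the tiling is hyperbolic; taking $n\le m$, the inequality $(n-2)(m-2)-4\le 4n(g-1)$ forced by $k_m\ge 1$ yields $(n-2)^2\le 4n(g-1)+4$, hence $3\le n\le 4g$, and then $m\le 2+\tfrac{4n(g-1)+4}{n-2}$. So only finitely many pairs $(n,m)$ occur, which proves the first statement; for $g>1$, splitting on $n$ and summing the admissible ranges of $m$ gives an explicit (generous) bound on the number of pairs, namely the quadratic factor $\tfrac{310g^2}{9}-\tfrac{101g}{3}+4$.

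For the factorial factor I would argue in two steps. First, with $a=n-2$, $b=m-2$ and $ab\ge 5$, the quantity $\dfrac{nm}{(n-2)(m-2)-4}=1+\dfrac{2a+2b+8}{ab-4}$ is maximized over integers at $(a,b)=(1,5)$, i.e. at $(n,m)=(3,7)$, with value $21$; hence $E\le 84(g-1)=84g-84$ for every RGCR link on a genus-$g$ surface, with equality only for the $[3,7,3,7]$ tiling. Second, once a valid $(n,m)$ is fixed (hence $k_n,k_m$, hence $E$), a tiling of a closed orientable genus-$g$ surface by $k_n$ $n$-gons and $k_m$ $m$-gons with the $[n,m,n,m]$ checkerboard structure is recorded by a bijection between the $E$ edge-slots of the $n$-gons and the $E$ edge-slots of the $m$-gons, the orientation of each identification being forced since $F$ is orientable; so at most $E!$ tilings arise from a given $(n,m)$. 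The $1$-skeleton of such a tiling is the diagram graph, which admits at most two alternating link structures (interchanged by mirror reflection), so each tiling yields at most two RGCR links. Multiplying the bound on the number of admissible pairs by $2\cdot E!\le 2\cdot(84g-84)!\le(84g-83)!$, valid for $g\ge 2$, produces the stated estimate.

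The bulk of the effort is the two enumerations — the list of admissible $(n,m)$ for a fixed genus and the edge-slot count for the gluings — and I expect the only real subtlety to be making the edge-slot encoding of a tiling genuinely injective, i.e. checking that the orientation data and the labelling of the individual polygons are determined or can be absorbed into the over-count, so that the crude factor $(84g-83)!$ truly dominates. Pinning down the exact coefficients of the polynomial is then routine bookkeeping.
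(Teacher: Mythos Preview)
Your overall plan is sound and tracks the paper's argument closely for the first half: both you and the paper reduce to counting admissible pairs $(n,m)$ via an Euler-characteristic computation (the paper phrases it as Gauss--Bonnet, but the resulting equations $4(g-1)=k_n\bigl(n-2-\tfrac{2n}{m}\bigr)$ are algebraically the same as your $k_n=\tfrac{4m(g-1)}{(n-2)(m-2)-4}$). One caution: the paper obtains the specific polynomial $\tfrac{310g^2}{9}-\tfrac{101g}{3}+4$ from the bounds $n\le 10g/3$ (after treating $k_n=1$ separately, where $n=m=4g$) and $m\le 12g-6$, and then does an explicit two-range sum in $m$. Your bound $n\le 4g$ is weaker, and your $m$-bound depends on $n$, so the sum you would compute is not obviously the same polynomial; you should either redo the arithmetic with your inequalities and check it stays below the stated quantity, or adopt the paper's sharper bound on $n$ coming from $k_n\ge 2$.

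Where you genuinely diverge is the factorial factor. The paper argues that RGCR links with a fixed $(n,m)$ correspond to genus-$g$ surface subgroups of the orientation-preserving $(n,m,2)$ triangle group, invokes Hurwitz's $84(g-1)$ theorem to bound the index, and then applies a standard subgroup-growth estimate $x\cdot x!^{d(G)-1}$ with $d(G)=2$ to get $(84g-83)!$. Your route is more elementary and self-contained: you bound the number of crossings directly by optimizing $\tfrac{nm}{(n-2)(m-2)-4}$ (recovering the Hurwitz constant $84$ combinatorially), and then over-count gluings by edge-slot bijections. Both give the same factorial, but the paper's approach makes the group-theoretic content explicit and avoids the injectivity-of-encoding subtlety you flag; conversely, yours avoids citing subgroup-growth machinery. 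Either is acceptable for an upper bound of this coarseness.
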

    
     \begin{proof}

    If $F$ is a torus, then by Corollary \ref{TorusCor} and \cite[Theorem 5.1]{CKPBiperiodic} the only Euclidean tilings which RGCR links correspond to are the tiling by regular hexagons and triangles and the tiling by squares. 

    Next consider when $F$ has $g>1$.
     First, we will bound the number of fundamental domains of tilings that correspond to RGCR links for a given $F$ with fixed genus $g$. In other words, we will bound the number of tuples $(m, n, k_m, k_n)$ which correspond to RGCR links in terms of $g$. 
      
     Without loss of generality, assume that $n \leq m$. We have that $n\geq 3$ because the links do not have bigons, so $m \geq 5$ because we are assuming that the $[n,m,n,m]$ tilings are hyperbolic. Using Gauss--Bonnet we find,
    \begin{align*}
        -2 \pi (2-2g) = -2 \pi \chi(F) & = a(F)\\
       & = \sum^{k_n}a(T_n)+ \sum^{k_m}a(T_m)
        \\
        &=k_n(n(\pi -\alpha_n)-2\pi)+ k_m(m(\pi - \alpha_m) -2\pi)
        \\
        & =k_n(n(\pi-\alpha_n)-2\pi) + \left(\frac{k_n n}{m}(m\alpha_n -2\pi)\right).
    \end{align*}
    
    \noindent Simplifying we find
    \begin{equation} \label{combarea_eq1}
    4 (g-1)= k_n \left(n-2- \frac{2n}{m}\right),
    \end{equation}
    and
    \begin{equation}
    \label{combarea_eq2}
        4(g-1)= k_m\left(m-2 -\frac{2m}{n}\right).
    \end{equation}
    
    \noindent Equations \eqref{combarea_eq1} and \eqref{combarea_eq2} imply that $k_n$ and $k_m$ are determined by $n$, $m$, and $g$. Therefore it suffices for us to find an upper bound on the number of pairs $(m, n)$ in terms of $g$. 
    
     If $k_n=1$, then $n=m=4g$ by \eqref{combarea_eq1} because our assumption that $n \leq m$ implies that $k_n \geq k_m$. Therefore there is only one option for $n$ and $m$ when $k_n=1$. 
     
     When $k_n \geq 2$,
    $$n-2 - \frac{2n}{m}= \frac{4(g-1)}{k_n} \leq \frac{4(g-1)}{2}=2g-2,$$
    
    \noindent simplifying, we find that
    $$ n\left(\frac{m-2}{m}\right) \leq 2(g-1)+2=2g.$$
    
    \noindent Recall that $m \geq 5$, so $\displaystyle{\frac{3}{5}\leq \frac{m-2}{m}}$. Therefore, 
    $$n \left(\frac{3}{5}\right) \leq n\left(\frac{m-2}{m}\right) \leq 2g,$$
    
   \noindent which implies that $\displaystyle{n \leq \frac{10g}{3}}.$ 
    
    We bound $m$ by starting with Equation \eqref{combarea_eq2}. The same set of inequalities hold except with $n$ replaced with $m$, $n\geq 3$, and $k_m \geq 1$. Therefore,
     $$m \left(\frac{1}{3}\right) \leq m\left(\frac{n-2}{n}\right) \leq \frac{4(g-1)}{1}+2 = 4g-2.$$
     
    \noindent Thus, $\displaystyle{m \leq 12g-6}.$
    
   Our goal is to bound the number of possible pairs $(m,n)$ for each fixed genus $g$. Using that $n \geq 3$ and that there is only the one option for $(m, n)$ with a value of $n$ larger than $10g/3$ (the case when $n=m$ and $k_n=1=k_m$), there are up to $\displaystyle{\frac{10g}{3}-2+1=\frac{10g-3}{3}}$ choices for $n$. Using that $m \geq 5$, there are \newline$12g-6-4=12g-10$ choices for $m$. 
   
   For each $\displaystyle{m \leq \frac{10g}{3}}$ there are $m-2$ choices for what $n$ can be and for each $m \geq (10g)/3$ there are $\displaystyle{\frac{10g-3}{3}}$ possibilities for $n$. Therefore the number of pairs $(m, n)$ is bounded above by: 
   
    $\displaystyle{\bigg[\sum_{m=5}^{(10g)/3}(m-2)\bigg] + ((10g-3)/3)(12g-6-((10g)/3))}$

    $=\displaystyle{\bigg[\sum_{m=5}^{(10g)/3}(m-2)\bigg] + (\frac{2}{9} (10g-3)(13g-9))}$

    $\displaystyle{=\frac{310g^2}{9}-\frac{101g}{3}+4}$.
    
    Distinct links can have the same tuple $(m, n, k_m, k_n)$ (see Figure \ref{fig:2linkssametile}). So, we also need to bound the number of RGCR links that correspond to the same tiling $T$. 
    The orientation preserving symmetry group of $T$ is finitely generated by two rotations preserving the checkerboard coloring. This group of symmetries is the orientation preserving subgroup of the triangle group which is all of the symmetries of $T$. The RGCR links corresponding to this tiling are then the result of taking the quotient of $\mathbb{H}^2$ by surface subgroups of this symmetry group of the same index. 
    By the $84(g-1)$ Theorem this index $x$ satisfies, $x \leq 84(g-1)$ (see \cite{FB}). The number of subgroups of index $x$ of a finitely generated group $G$ is bounded above by $x \cdot x!^{d(G)-1}$, where $d(G)$ is the minimal number of generators of $G$ \cite{SubgroupGrowth}. Therefore the number of weakly generalized alternating links on $F$ that correspond to $T$ is bounded by $x \cdot x!^{d(G)-1}= x \cdot x! \leq (x+1)! \leq (84g-83)!.$
    \end{proof}
    
  An alternative method for finding a bound on the number of links corresponding to a given tiling with a given number of polygons in their diagrams is to consider the number of feasible gluings of each of the faces of $\pi(L)$ along their edges. However, this results in a larger upper bound.
  
  \begin{remark}
    \normalfont We cannot bound the number of polygons that appear in $\pi(L)$ for an RGCR link on the torus with these calculations. For a torus, equation \eqref{combarea_eq1} becomes $0=k_n(n-2-\frac{2n}{m})$ with $n=4=m$ or $n=3$ and $m=6$.
  \end{remark}

    \subsection{A Right-angled Knot}
    Gan showed that among alternating links in $S^3$ there are no right-angled completely realizable knots \cite[Theorem 3.14]{Gan}. More broadly, Champanerkar, Kofman, and Purcell conjecture that there does not exist a right-angled knot in $S^3$ \cite[Conjecture 5.12]{CKPright}. Here, we show that this conjecture does not extend to links in thickened surfaces. 

\begin{theorem}
\label{KnotThm}
There exists a right-angled knot in thickened surfaces.
\end{theorem}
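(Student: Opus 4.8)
The idea is to reduce the statement to a small combinatorial construction using Theorem \ref{TFAE}. By the implication $(4)\Rightarrow(1)$ of that theorem, any cellular weakly generalized alternating link $L$ on a surface $F$ whose diagram consists of regular polygons arranged in an $[n,m,n,m]$ pattern at every vertex is RGCR, so in particular its complement carries a right-angled structure. Such a diagram is nothing but the $1$-skeleton of the regular $[n,m,n,m]$ tiling of $\mathbb{H}^2$ (four edges, hence a $4$-valent vertex, meeting at each point) pushed down to a closed quotient surface $F$, with $L$ recovered from this $4$-valent graph by the usual alternating rule. So it suffices to choose the tiling together with a torsion-free quotient group -- equivalently, the edge identifications of a small fundamental collection of tiles -- so that the resulting link has exactly one component. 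Note that $F=T^2$ is excluded by Corollary \ref{TorusCor}, since the triaxial link and the square weave are not knots; hence $F$ will necessarily have genus at least $2$.

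\textbf{The construction.} I would work with the right-angled tiling $[8,8,8,8]$, that is, the $\{8,4\}$ tiling of $\mathbb{H}^2$ by regular octagons all of whose interior angles equal $\pi/2$, so here $n=m=8$. By \eqref{combarea_eq1}, a genus-$g$ quotient is tiled by $k_n=k_m=g-1$ octagons of each color, so the smallest cases are genus $2$ with two octagons and genus $3$ with four octagons -- the latter being exactly the setting of Figure \ref{fig:2linkssametile}. Having fixed a concrete pattern of edge identifications on the fundamental octagons, one counts components by tracing strands: a strand of $L$ runs straight through every crossing, leaving along the diagram edge opposite the one it entered, so the components of $L$ are precisely the closed combinatorial geodesics of the quotient tiling, obtained by always going straight. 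The goal is to exhibit an identification for which there is a single such geodesic, visiting all $8(g-1)$ edges; for the four-octagon genus-$3$ picture this can be read directly off Figure \ref{fig:2linkssametile}, where the two displayed links already have different numbers of components, so it only remains to confirm that one of them is connected. For such an $L$, Theorem \ref{TFAE} gives that $L$ is RGCR, hence has a right-angled complement, and $L$ is a knot; this proves the theorem.

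\textbf{The main obstacle.} The right-angled geometry is handed to us for free once the diagram is regular and $[n,m,n,m]$; the real work is securing connectivity while remaining inside the admissible class. The edge gluings must descend from a torsion-free subgroup of the $[8,8,8,8]$ symmetry group, so that every vertex stays $4$-valent, the checkerboard coloring is preserved, and $F$ is a closed surface; and the quotient diagram must be cellular, weakly prime, and meet every region, i.e.\ be reduced alternating. The representativity condition built into weakly generalized alternating links is automatic here, because $F$ has no compressing disk in $F\times I$. So the proof comes down to a bounded case check: among the valid identifications of two (or four) octagons, find one whose straight-ahead geodesic structure is a single closed curve. I expect the two-octagon, genus-$2$ case to be tight and possibly to force more than one component, in which case one passes to the four-octagon genus-$3$ example; in either case this is a finite verification rather than a conceptual difficulty.
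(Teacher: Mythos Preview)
Your approach is exactly that of the paper: invoke Theorem \ref{TFAE} to reduce to exhibiting a one-component alternating diagram coming from the regular right-angled octagon tiling on a closed hyperbolic quotient surface, and then perform the finite strand-tracing check. The paper's entire proof is a reference to Figure \ref{RGCR Knot}, which is the two-octagon genus-$2$ case---so your suspicion that genus $2$ might force multiple components is unfounded, and there is no need to pass to genus $3$.
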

\begin{proof}
    We construct a right-angled knot in a thickened genus $2$ surface in Figure \ref{RGCR Knot}. This knot corresponds to a hyperbolic tiling by regular octagons.
\end{proof}
\begin{figure}
    \centering
    \includegraphics[width=5.75in]{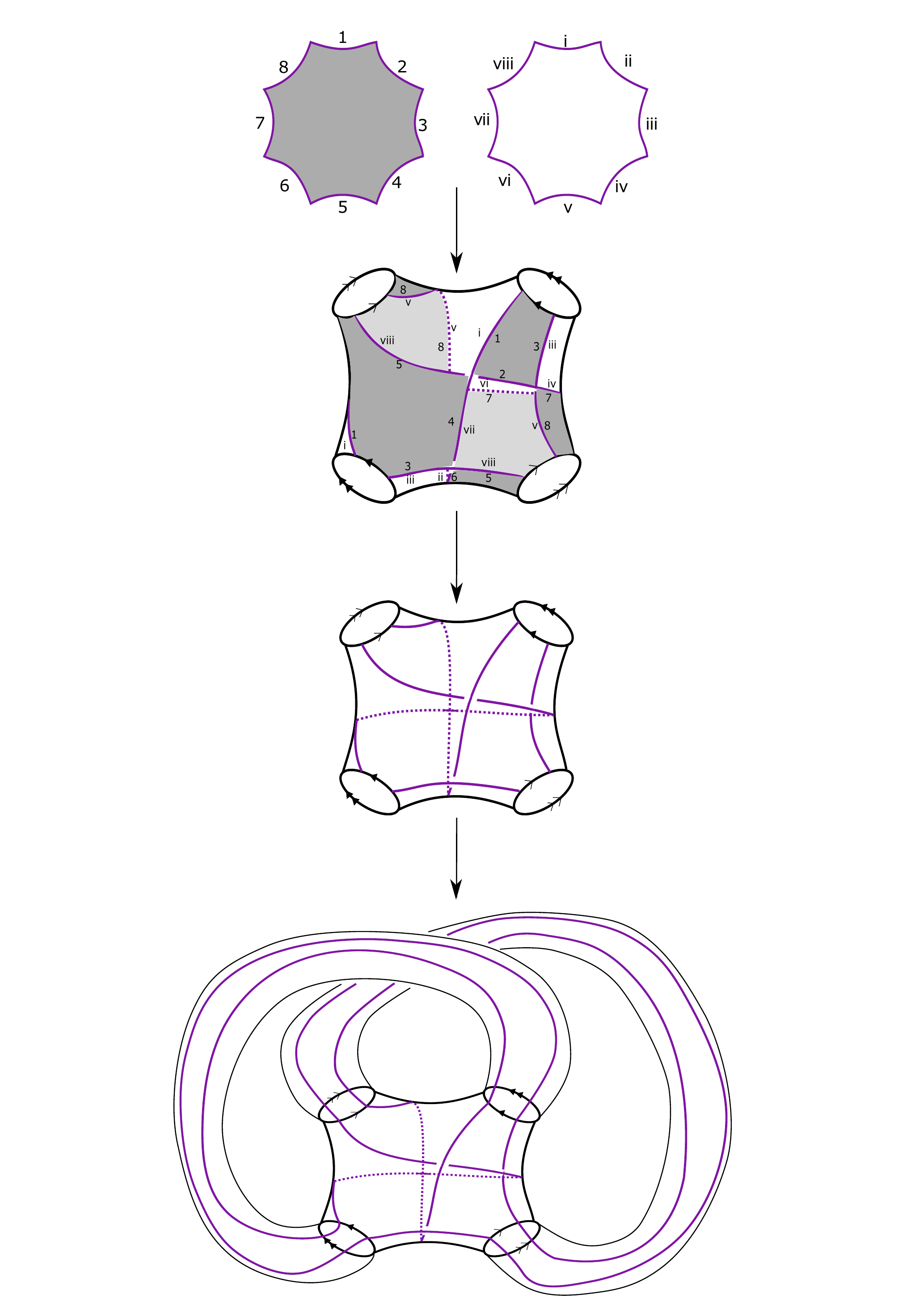}
    \caption[A right-angled knot in $F \times I$.]{An RGCR knot.}
    \label{RGCR Knot}
\end{figure}

   \newpage
\begin{center}
\begin{longtable}{|c|c|c|c|c|}

\caption{The tilings, and quantities of diagram faces, which correspond to RGCR links on projection surfaces of genus $2$ to genus $4$. In other words, the possible values of $m$, $n$, $k_m$, and $k_n$ for each projection surface. These values are calculated from Equations \ref{combarea_eq1} and \ref{combarea_eq2} and the bounds in the proof of Theorem \ref{finitelymany}. For further detail on RGCR links corresponding to higher genus surfaces and calculations of the interior angles of their polygons see \cite{RKKThesis}.}

\label{RGCR_tilings_table} \\

\hline \multicolumn{1}{|c|}{\textbf{Genus}} & \multicolumn{1}{c|}{\textbf{m}} & \multicolumn{1}{c|}{\textbf{n}} & \multicolumn{1}{c|}{$\mathbf{k_m}$}& \multicolumn{1}{c|}{$\mathbf{k_n}$}\\ \hline 
\endfirsthead

\multicolumn{5}{c}%
{{\bfseries \tablename\ \thetable{} -- continued from previous page}} \\
\hline \multicolumn{1}{|c|}{\textbf{\text{    }Genus\text{ 
   }}} & \multicolumn{1}{c|}{\textbf{\text{ }\text{    }m\text{ }\text{ }}} & \multicolumn{1}{c|}{\textbf{\text{ }\text{ }n \text{ }\text{ }}} & \multicolumn{1}{c|}{$\mathbf{\text{ }\text{ }k_m \text{ }\text{ }}$}& \multicolumn{1}{c|}{$\mathbf{k_n}$} \\ \hline 
\endhead

\hline \hline
\endlastfoot

        2 & 5 & 4 & 8 & 10  \\ \hline
        2 & 6 & 4 & 4 & 6 \\ \hline
        2 & 7 & 3 & 12 & 28 \\ \hline
        2 & 8 & 3 & 6 & 16 \\ \hline
        2 & 8 & 4 & 2 & 4 \\ \hline
        2 & 9 & 3 & 4 & 12 \\ \hline
        2 & 10 & 3 & 3 & 10 \\ \hline
        2 & 10 & 5 & 1 & 2 \\ \hline
        2 & 12 & 3 & 2 & 8 \\ \hline
        2 & 12 & 4 & 1 & 3 \\ \hline
        2 & 18 & 3 & 1 & 6 \\ \hline
        2 & 8 & 8 & 1 & 1  \\ \hline
        2 & 6 & 6 & 2 & 2 \\ \hline
        2 & 5 & 5 & 4 & 4 \\ \noalign{\global\arrayrulewidth2pt}
 \hline
 \noalign{\global\arrayrulewidth0.4pt}
        3 & 5 & 4 & 16 & 20  \\ \hline
        3 & 6 & 4 & 8 & 12  \\ \hline
        3 & 6 & 5 & 5 & 6  \\ \hline
        3 & 7 & 3 & 24 & 56  \\ \hline
        3 & 8 & 3 & 12 & 32  \\ \hline
        3 & 8 & 4 & 4 & 8 \\ \hline
        3 & 9 & 3 & 8 & 24  \\ \hline
        3 & 9 & 6 & 2 & 3 \\ \hline
        3 & 10 & 3 & 6 & 20  \\ \hline
        3 & 10 & 5 & 2 & 4  \\ \hline
        3 & 12 & 3 & 4 & 16  \\ \hline
        3 & 12 & 4 & 2 & 6  \\ \hline
        3 & 14 & 3 & 3 & 14 \\ \hline
        3 & 14 & 7 & 1 & 2  \\ \hline
        3 & 18 & 3 & 2 & 12  \\ \hline
        3 & 20 & 4 & 1 & 5  \\ \hline
        3 & 30 & 3 & 1 & 10  \\ \hline
        3 & 12 & 12 & 1 & 1  \\ \hline
        3 & 8 & 8 & 2 & 2  \\ \hline
        3 & 6 & 6 & 4 & 4  \\ \hline
        3 & 5 & 5 & 8 & 8  \\ 
        \noalign{\global\arrayrulewidth2pt}
 \hline
 \noalign{\global\arrayrulewidth0.4pt}
        4 & 5 & 4 & 24 & 30 \\ \hline
        4 & 6 & 4 & 12 & 18  \\ \hline
        4 & 7 & 3 & 36 & 84  \\ \hline
        4 & 7 & 4 & 8 & 14  \\ \hline
        4 & 8 & 3 & 18 & 48  \\ \hline
        4 & 8 & 4 & 6 & 12  \\ \hline
        4 & 9 & 3 & 12 & 36  \\ \hline
        4 & 10 & 3 & 9 & 30  \\ \hline
        4 & 10 & 4 & 4 & 10  \\ \hline
        4 & 10 & 5 & 3 & 6  \\ \hline
        4 & 12 & 3 & 6 & 24  \\ \hline
        4 & 12 & 4 & 3 & 9  \\ \hline
        4 & 12 & 6 & 2 & 4  \\ \hline
        4 & 15 & 3 & 4 & 20  \\ \hline
        4 & 16 & 4 & 2 & 8 \\ \hline
        4 & 18 & 3 & 3 & 18  \\ \hline
        4 & 18 & 9 & 1 & 2  \\ \hline
        4 & 24 & 3 & 2 & 16 \\ \hline
        4 & 28 & 4 & 1 & 7  \\ \hline
        4 & 42 & 3 & 1 & 14  \\ \hline
        4 & 16 & 16 & 1 & 1  \\ \hline
        4 & 10 & 10 & 2 & 2  \\ \hline
        4 & 8 & 8 & 3 & 3  \\ \hline
        4 & 7 & 7 & 4 & 4  \\ \hline
        4 & 6 & 6 & 6 & 6  \\ \hline
        4 & 5 & 5 & 12 & 12  \\ 
        \noalign{\global\arrayrulewidth2pt}

\end{longtable}
\end{center}
    \newpage
    
    \printbibliography
\end{document}